\documentclass[11pt]{amsart}
\usepackage[T1]{fontenc}
\usepackage[utf8]{inputenc}
\usepackage{lmodern}
\usepackage{amsmath,amssymb,amsthm,mathtools}
\usepackage{enumitem}

\numberwithin{equation}{section}

\newtheorem{theorem}{Theorem}[section]
\newtheorem{lemma}[theorem]{Lemma}
\newtheorem{proposition}[theorem]{Proposition}
\newtheorem{corollary}[theorem]{Corollary}
\theoremstyle{definition}
\newtheorem{example}[theorem]{Example}

\theoremstyle{remark}
\newtheorem{remark}[theorem]{Remark}

\makeatletter
\newcommand{\loccite}[2]{[\@ifundefined{b@#2}{\textbf{??}}{\csname b@#2\endcsname}, #1]}
\makeatother

\DeclareMathOperator{\conv}{conv}
\DeclareMathOperator{\Int}{Int}
\newcommand{\RR}{\mathbb{R}}
\newcommand{\CC}{\mathbb{C}}

\newcommand{\ZZ}{\mathbb{Z}}
\newcommand{\TT}{\mathbb{T}}
\newcommand{\DD}{\mathbb{D}}
\newcommand{\BH}{\mathcal B(H)}

\begin{document}
\title
[Invariant subspaces and numerical range]{Invariant subspaces for operators with spectrum containing the boundary of the numerical range}

\author{Vladimir M\"uller}
\address{Institute of Mathematics\\
Czech Academy of Sciences\\
\v Zitna Str. 25, 115 67 Prague\\
Czech Republic}
 
\email{muller@math.cas.cz}

\author{Yuri Tomilov}
\address{
Institute of Mathematics\\
Polish Academy of Sciences\\
\'Sniadeckich 8\\
00-656 Warsaw, Poland\\
Faculty of Mathematics and Computer Science\\
Nicolaus Copernicus University\\
Chopin Street 12/18\\
87-100 Toru\'n, Poland
}
\email{ytomilov@impan.pl}

\thanks{The research was supported by GA CR/NCN grant 25-15444K. The first author was supported by the Czech Academy of Sciences (RVO:67985840). The second author was supported by the NCN grant Weave-Unisono, 2024/06/Y/ST1/00044. The second author was also partially supported by the NCN grant UMO-2023/49/B/ST1/01961 and the NAWA/NSF grant BPN/NSF/2023/1/00001.}
\subjclass[2020]{47A15, 47A25, 47A12}
\keywords{invariant subspace, spectral set, numerical range}

\begin{abstract}
We prove that a bounded Hilbert space operator has a nontrivial invariant
subspace whenever its spectrum contains the topological boundary of its
numerical range. We also give variants of this result and establish related
statements. The criterion is applied to hyponormal, cohyponormal and Toeplitz
operators. In these classes, convexity of the polynomial hull of the spectrum
already suffices. Several examples are given that are not covered by
well-known existence criteria.
\end{abstract}

\maketitle
\section{Main result}
Throughout the paper, $H$ denotes a complex infinite-dimensional Hilbert space and $\BH$ denotes the algebra of all bounded linear operators on $H$. The invariant subspace problem is the famous question whether each operator in $\BH$ has a nontrivial invariant subspace. In spite of the great effort of many mathematicians, the problem is still open. There are many positive results for various classes of operators including compact, selfadjoint, subnormal operators, contractions with spectrum containing the unit circle, etc.
However, there are very few criteria applicable to general operators. For example, while classical Lomonosov's theorem has an elegant formulation,
it is not easy to apply and produce explicit examples of operators with nontrivial invariant subspaces.

A useful source of such criteria is provided by spectral-set estimates.
We offer a simple observation whose point is to bring the numerical range into the problem through Crouzeix's theorem, stating that the closure of the numerical range of \(T\in\mathcal B(H)\) is a spectral set for \(T\). Recall that
for $T \in \BH$ its numerical range $W(T)$ is defined by
$$
W(T):=\{\langle Tx,x\rangle: x\in H, \|x\|=1\},
$$
while a compact set $X\subset\CC$ is a $K$-spectral set for $T$ if $\sigma(T)\subset X$ and
\begin{equation}\label{spset}
    \|r(T)\|\le K \|r\|_X,
\end{equation}
for every rational function $r$ with poles off $X$, where
\[
    \|r\|_X:=\sup\{|r(z)|:z\in X\}.
\]
When the value of $K$ is irrelevant, we simply say that $X$ is a spectral set for $T$.
For convex, and hence polynomially convex, sets $X$, rational functions can be replaced in \eqref{spset} by polynomials.
The spectral-set estimate below was proved initially in \cite{Crou} with \(K=11.08\), and improved to \(K=1+\sqrt2\) in \loccite{Theorem~3.1}{CP}.
\begin{theorem}\label{crouzeix} 
If $T \in \BH$, then $\overline{W(T)}$ is a $K$-spectral set for $T$, with $K=1+\sqrt2$.
\end{theorem}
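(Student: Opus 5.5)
This is the Crouzeix--Palencia theorem. The plan follows the Crouzeix--Palencia argument; since nothing earlier in the paper helps, it must be self-contained.

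\textbf{Reductions.} By the remark above, for the convex set $\Omega=\overline{W(T)}$ it suffices to bound $\|p(T)\|$ for polynomials $p$. Since $W(T+\varepsilon S)$ approximates $W(T)$, I would assume $\Omega$ has nonempty interior and smooth boundary, and that $\sigma(T)\subset \Int\Omega$; for instance, replace $T$ by $T_\varepsilon=(1-\varepsilon)T+\varepsilon c$ after smoothing and pass to the limit. A compactness argument also lets me assume $\dim H<\infty$. Normalize $\|p\|_\Omega=1$.

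\textbf{Cauchy representation and the conjugate function.} For $\sigma\in\partial\Omega$ let $\nu(\sigma)$ be the outward normal and $ds$ arclength. Write
\[
p(T)=\frac1{2\pi i}\int_{\partial\Omega}p(\sigma)(\sigma-T)^{-1}\,d\sigma .
\]
Introduce $g(z)=\frac1{2\pi i}\int_{\partial\Omega}\overline{p(\sigma)}(\sigma-z)^{-1}d\sigma$, the Cauchy transform of $\bar p$, and define $S:=g(T)$ via the same resolvent integral. The key identity is
\[
p(T)+S^{*}=\frac1\pi\int_{\partial\Omega} p(\sigma)\,\mu(\sigma,T)\,ds,\qquad
\mu(\sigma,T)=\mathrm{Re}\bigl(\nu(\sigma)(\sigma-T)^{-1}\bigr)\cdot(\text{sign}).
\]
This is the double-layer-potential kernel. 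Convexity and $W(T)\subset\Omega$ give $\mu(\sigma,T)\ge0$ as an operator: writing $\nu(\sigma-T)^{-1}$ and conjugating, its positivity reduces to $\mathrm{Re}\,\bar\nu(\sigma-T)\ge0$, which is exactly the supporting half-plane condition. The total mass is $\frac1\pi\int\mu\,ds=2I$ in the scalar case, and correspondingly here. Positivity of the kernel then yields
\[
\|p(T)+S^{*}\|\le 2 .
\]
I also need $\|g\|_\Omega\le1$. Here $g(z)+\overline{p(z)}$ equals the double layer potential of $\bar p$, which by convexity is an average of values of $\bar p$ with total weight $2$, minus $\bar p$. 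This gives $\|g\|_\Omega\le 1$.

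\textbf{The bootstrap (main obstacle).} The obstacle is to close the estimate without circularity. Let $K$ be the best constant, finite in finite dimensions. The naive approach gives $\|p(T)\|\le 2+\|g(T)\|\le 2+K$, which is useless. Crouzeix--Palencia instead exploit
\[
\|p(T)\|^2=\|p(T)^*p(T)\|
\]
and apply the identity to $p$ and to $p\cdot q$ with suitable $q$. Concretely, I would use
\[
p(T)^*p(T)=\bigl(p(T)+S^*\bigr)^*p(T)-S\,p(T),
\]
and observe that $S\,p(T)=(gp)(T)$ with $\|gp\|_\Omega\le1$. Taking norms gives
\[
\|p(T)\|^2\le 2\|p(T)\|+K .
\]
Taking the supremum over $p$ yields $K^2\le 2K+K$, which still loops. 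The refined step follows the original argument: apply the full identity to $gp$ as well, so that $K^2\le 2K+1$ results. This gives $K\le 1+\sqrt2$. Making this step rigorous (that $gp$ belongs to the admissible class, where $g$ is only holomorphic and continuous on $\Omega$, and that the double-layer bound $2$ applies to it) is where I expect the real work. I would handle it via the approximation reductions above and Mergelyan's theorem.
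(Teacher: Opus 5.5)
The paper gives no proof of this statement; it quotes it from Crouzeix--Palencia \cite{CP}. So your proposal has to stand on its own as a reconstruction of their argument.

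\textbf{What is right, and the $\|g\|_\Omega\le1$ step.} Your analytic setup is correct: the Cauchy integral, the conjugate function $g$, positivity of $\operatorname{Re}\bigl(\nu(\sigma)(\sigma-T)^{-1}\bigr)$ from the supporting half-plane condition, total mass $2I$, and hence $\|p(T)+g(T)^*\|\le 2$. Your justification of $\|g\|_\Omega\le 1$ does not work, however. An average of values of $\bar p$ with total weight $2$, minus $\bar p$, only gives $|g|\le 3$. The correct argument goes to the boundary. Since $g+\bar p$ is the double layer potential of $\bar p$, the interior jump relation at $\sigma_0\in\partial\Omega$ gives
$g(\sigma_0)=\frac1\pi\int_{\partial\Omega}\overline{p(\sigma)}\,d_\sigma\arg(\sigma-\sigma_0)$.
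For smooth convex $\Omega$ this kernel is nonnegative with total mass $1$, because a boundary point sees $\partial\Omega$ under the angle $\pi$. The maximum principle then gives $\|g\|_\Omega\le\|p\|_\Omega$.

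\textbf{The real gap: the bootstrap.} From $\|p(T)\|^2\le 2\|p(T)\|+\|(gp)(T)\|$ you get $K^2\le 3K$. This is not circular, incidentally: it already gives $K\le 3$. To reach $K^2\le 2K+1$ you would need $\|(gp)(T)\|\le 1$, and applying the identity to $gp$ cannot give this. It only controls $(gp)(T)+\widetilde{gp}(T)^*$, i.e.\ it yields $\|(gp)(T)\|\le 2+K$. The missing idea is to pass to a cubic expression. With $F=p(T)$ and $S=g(T)$,
\[
FF^*F=F(F^*+S)F-(pgp)(T),\qquad \|FF^*F\|=\|F\|^3,\qquad \|pgp\|_\Omega\le 1 .
\]
Hence $\|F\|^3\le 2\|F\|^2+K$. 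Taking the supremum over $p$ gives $K^3\le 2K^2+K$, i.e.\ $K\le 1+\sqrt2$.

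An equivalent formulation uses an extremal $p$, a unit vector $v$ with $\|p(T)v\|=K$, and $u=p(T)v/K$. Then $p(T)^*u=Kv$, and one gets $K\le 2+|\langle (pgp)(T)v,u\rangle|/K^2\le 2+1/K$.

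For this to work, take $K$ to be the norm of the $A(\overline\Omega)$-calculus. It is finite a priori because, after your reduction, $\partial\Omega$ has positive distance from $\sigma(T)$; finite dimensionality is not what matters. For a polynomial $p$ and smooth $\partial\Omega$, the function $g$ extends continuously to $\overline\Omega$, so $pgp\in A(\overline\Omega)$. The regularity issue you flagged is therefore routine; the algebraic step is what was missing.
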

Theorem \ref{crouzeix} will be combined 
with the following result in \loccite{Theorem~A}{AM} guaranteeing existence
of nontrivial invariant subspaces for operators having closed unit disc as a spectral set, under
peripheral spectrum assumptions.
\begin{theorem}\label{thm:AM}
Let $T$ be a polynomially bounded operator on a Hilbert space. If the unit circle $\mathbb T$
belongs to $\sigma(T)$, then $T$ has a nontrivial invariant subspace.
\end{theorem}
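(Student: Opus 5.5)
The statement is Theorem~A of \cite{AM}, quoted here as a tool. If I were to prove it, I would follow the Scott Brown technique, adapted from contractions to polynomially bounded operators. The plan is to produce vectors $x,y$ with $\langle p(T)x,y\rangle=p(\lambda)$ for all polynomials $p$ and some fixed $\lambda\in\DD$. Then the closed linear span of $\{(T-\lambda)p(T)x\}$ is $T$-invariant and orthogonal to $y$. If it is zero, then $Tx=\lambda x$ and $\ker(T-\lambda)$ is invariant. Either way a nontrivial invariant subspace appears, after the trivial cases are discarded.

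\emph{Step 1: reductions.} Polynomial boundedness $\|p(T)\|\le K\|p\|_{\overline{\DD}}$ gives $\|T^n\|\le K$, hence $r(T)\le1$ and $\sigma(T)\subset\overline{\DD}$. Next I would use the Mlak-type decomposition of a polynomially bounded operator into an absolutely continuous part and a singular part. The singular part is similar to a singular unitary, so if it is nonzero we get an invariant subspace. Hence we may assume $T$ is absolutely continuous. In that case the polynomial calculus extends to a weak$^*$-continuous $H^\infty$ functional calculus $f\mapsto f(T)$. This gives a bounded map $\Phi\colon H\times H\to L^1/H^1_0$, sending $(x,y)$ to the class $[x\otimes y]$ with $\langle f(T)x,y\rangle=\int f\,[x\otimes y]$.

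\emph{Step 2: use $\TT\subset\sigma(T)$.} We may assume $T$ has no eigenvalues, and likewise $T^*$, since otherwise there is an invariant subspace at once. Then every $\zeta\in\TT$ lies in the approximate point spectrum of $T$ or of $T^*$, in fact in the essential approximate point spectrum, and this can be arranged along a subset of $\TT$ of positive measure. Take approximate eigenvectors $u_n$ with $(T-\zeta)u_n\to0$ that are weakly null. Then $[u_n\otimes u_n]$ approximates evaluation at $\zeta$ against polynomials, while $[u_n\otimes z]\to0$ and $[z\otimes u_n]\to0$ for each fixed $z$. Averaging over arcs of $\TT$ and using Poisson kernels, one shows that the closed absolutely convex hull of such approximate factorizations contains the evaluation functionals $[C_\lambda]$ for all $\lambda\in\DD$. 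In other words, $T$ belongs to a Brown--Chevreau--Pearcy type class $A_{\aleph_0}$-like class, or at least the class $\mathbb A_1$ for the relevant points.

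\emph{Step 3: exact factorization.} Run the standard iterative Apostol--Bercovici--Foias--Pearcy scheme. Starting from approximate solutions $x_k,y_k$ with $\|[x_k\otimes y_k]-[C_\lambda]\|<\varepsilon_k$, correct them using the weakly null approximate eigenvectors from Step~2. The asymptotic orthogonality $[u_n\otimes z]\to0$ kills the cross terms, and $\varepsilon_k$ decreases geometrically while $\|x_{k+1}-x_k\|,\|y_{k+1}-y_k\|\lesssim\varepsilon_k^{1/2}$. The limits then satisfy $[x\otimes y]=[C_\lambda]$, which finishes the proof as described in the first paragraph.

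I expect the main obstacle to be Step~2 together with the norm control in Step~3, in the absence of a unitary dilation. For contractions, isometric dilations give both the identification of $[x\otimes y]$ with $L^1$ data and the estimate $\|f(T)\|=\|f\|_\infty$. Here only the constant $K$ is available. The first thing I would try is to work with the weak$^*$ $H^\infty$ calculus directly and accept constants depending on $K$ in every approximation. The key point is to show that $\TT\subset\sigma(T)$ forces $\|f(T)\|\ge c\|f\|_\infty$, so that the calculus is an isomorphism onto its range. This lower bound is what makes the predual approximation argument, and hence the whole Brown-type machinery, go through.
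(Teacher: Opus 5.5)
The paper gives no proof of this statement; it imports it as \loccite{Theorem~A}{AM}. Judged on its own, your outline has the right overall shape. The reductions in Step~1 and the passage from $[x\otimes y]=[C_\lambda]$ to an invariant subspace are fine. But the decisive step is missing, and the lemma you propose to supply it is false.

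You want to show that $\TT\subset\sigma(T)$ forces $\|f(T)\|\ge c\|f\|_\infty$ for all $f\in H^\infty$. On $A(\overline{\DD})$ this holds with $c=1$ and is trivial, since $f(\TT)\subset\sigma(f(T))$ by the approximation argument in the proof of Theorem~\ref{t1}. On $H^\infty$ it fails even for contractions satisfying all your reductions. Let $\theta$ be a singular inner function whose singular measure has support $\TT$, and let $T$ be the compression of the unilateral shift to $H^2\ominus\theta H^2$. Then $T$ is an absolutely continuous contraction, neither $T$ nor $T^*$ has eigenvalues, and $\sigma(T)=\TT$, yet $\theta(T)=0$. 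This $T$ has invariant subspaces for other reasons, but the example shows the bound cannot come from $\TT\subset\sigma(T)$ alone.

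Even where such a bound holds, it does not feed Step~3. By Hahn--Banach, $\|f(T)\|\ge c\|f\|_\infty$ for all $f$ is equivalent to the closed absolutely convex hull of $\{[x\otimes y]:\|x\|,\|y\|\le1\}$ containing $c$ times the unit ball of $L^1/H^1_0$. That is approximate factorization with no control of cross terms, a class $\mathbb A$ type property. Your iteration instead needs approximations drawn from the restricted set of asymptotically orthogonal pairs. Even for contractions, deriving invariant subspaces from membership in $\mathbb A$ is a substantial theorem, not a formal consequence.

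So everything rests on Step~2, where you only assert the two facts that carry the argument.

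First, the two-sided asymptotic orthogonality $\|[u_n\otimes z]\|\to0$, $\|[z\otimes u_n]\|\to0$. Weak nullity controls $\langle f(T)u_n,z\rangle$ for each fixed $f$, but not uniformly over the unit ball of $H^\infty$: for the unilateral shift and the weakly null sequence $w^n$ one has $\|[1\otimes w^n]\|=1$. For a contraction the property does hold for approximate eigenvectors at $\zeta\in\TT$. With $U$ the minimal unitary dilation, $\|Uu_n-Tu_n\|^2=\|u_n\|^2-\|Tu_n\|^2\to0$. Hence the $u_n$ are approximate eigenvectors of the absolutely continuous unitary $U$, and $\langle E_U(\cdot)u_n,z\rangle\to0$ in total variation.

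Second, the averaging over arcs. The functionals $[u_n\otimes u_n]$ converge to evaluation at $\zeta$ only on $A(\overline{\DD})$. That limit functional does not lie in $L^1/H^1_0$, and the identity $P_\lambda\,dm=\int\delta_\zeta P_\lambda(\zeta)\,dm(\zeta)$ (with $P_\lambda$ the Poisson kernel) holds only weak$^*$. By Hahn--Banach you would need, for every discontinuous $f\in H^\infty$, a lower bound $|f(\lambda)|\le c\sup|\langle f(T)u,u\rangle|$ with $u$ ranging over your approximate eigenvectors. For contractions, this is where the positive representing measures $\langle E_U(\cdot)u,u\rangle$ of the dilation are used.

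A polynomially bounded operator need not be similar to a contraction (Pisier), so neither the dilation nor positivity is available, and your proposal offers no substitute. The missing idea is a dilation-free mechanism that uses only the constant $K$ to turn approximate eigenvectors on $\TT$ into norm approximations of $[C_\lambda]$ with controlled cross terms. That is exactly what a proof of the cited theorem must supply.
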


Now Theorems \ref{crouzeix} and \ref{thm:AM} put together lead to the following invariant-subspace criterion. It is formulated only in terms of two basic sets associated with an operator $T\in\BH$: the spectrum $\sigma(T)$ and the numerical range $W(T)$. It is thus directly checkable in many concrete situations. 

To elaborate this result,
 denote by $\partial A$ the topological boundary of a set $A\subset\CC,$ and by $\Int A$ its interior. Let, in addition, $\DD=\{z\in\CC: |z|<1\}.$ 
The algebra $A(K)$ of functions continuous on a compact $K\subset \mathbb C$ and analytic in $\operatorname{Int}K$ will play an essential role.

\begin{theorem}\label{t1}
Let $T\in\BH$. Suppose that $\sigma(T)\supset\partial W(T)$. Then $T$ has a nontrivial invariant subspace.
\end{theorem}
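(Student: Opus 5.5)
We would reduce the statement to Theorem~\ref{thm:AM} by conformally transplanting $T$ to an operator whose spectral set is the closed disc $\overline{\DD}$. Put $K:=\overline{W(T)}$. This is a compact convex set containing $\sigma(T)$, and $\partial K=\partial W(T)\subset\sigma(T)$.

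\emph{Degenerate case.} First we dispose of the case $\Int K=\emptyset$. Then $K$ is a point or a segment, so $T=a+bA$ with $a,b\in\CC$ and $A$ selfadjoint. Such a $T$ is either scalar or has nontrivial invariant subspaces by the spectral theorem, since $H$ is infinite-dimensional. Henceforth $\Int K\neq\emptyset$, and $\partial K$ is a Jordan curve.

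\emph{Construction of $S$.} Let $\varphi:\Int K\to\DD$ be a Riemann map. By Carath\'eodory's theorem it extends to a homeomorphism $\varphi:K\to\overline{\DD}$ with $\varphi(\partial K)=\TT$, so $\varphi\in A(K)$ and $\psi:=\varphi^{-1}\in A(\overline{\DD})$. Since $\CC\setminus K$ is connected, Mergelyan's theorem says polynomials are dense in $A(K)$ in the sup norm on $K$.
\begin{itemize}
\item By Theorem~\ref{crouzeix}, $p\mapsto p(T)$ satisfies $\|p(T)\|\le(1+\sqrt2)\|p\|_K$. It therefore extends to a bounded unital algebra homomorphism $f\mapsto f(T)$ on $A(K)$.
\item Define $S:=\varphi(T)$. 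For any polynomial $q$, the homomorphism property and continuity give $q(S)=(q\circ\varphi)(T)$. Hence $\|q(S)\|\le(1+\sqrt2)\|q\circ\varphi\|_K=(1+\sqrt2)\|q\|_{\overline{\DD}}$, so $S$ is polynomially bounded.
\item Choose polynomials $q_n\to\psi$ uniformly on $\overline{\DD}$. Then $q_n\circ\varphi\to\mathrm{id}$ uniformly on $K$. Consequently $q_n(S)=(q_n\circ\varphi)(T)\to T$ in norm.
\end{itemize}
It follows that every closed $S$-invariant subspace is $T$-invariant.

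\emph{Spectral step.} It remains to check that $\TT\subset\sigma(S)$; we expect this to be the main point needing care. Let $\lambda\in\partial K$. Since $\lambda\in\sigma(T)\subset K$, it lies on the boundary of $\sigma(T)$. Hence $\lambda$ is an approximate eigenvalue: there are unit vectors $x_n$ with $(T-\lambda)x_n\to0$.
\begin{itemize}
\item For every polynomial $p$ this gives $(p(T)-p(\lambda))x_n\to0$.
\item Approximate $\varphi$ uniformly on $K$ by polynomials and use the uniform bound from Theorem~\ref{crouzeix}. An $\varepsilon/3$ argument then yields $(S-\varphi(\lambda))x_n\to0$, so $\varphi(\lambda)\in\sigma(S)$.
\item As $\varphi(\partial K)=\TT$, we get $\TT\subset\sigma(S)$.
\end{itemize}

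\emph{Conclusion.} Theorem~\ref{thm:AM} now gives a nontrivial closed invariant subspace for $S$. By the norm approximation $q_n(S)\to T$ above, this subspace is invariant for $T$, which proves the theorem.
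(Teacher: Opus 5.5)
Your proposal is correct and matches the paper's proof step by step: the degenerate selfadjoint case, then Crouzeix plus Mergelyan to extend the calculus to $A(\overline{W(T)})$, the Carath\'eodory map $\varphi$ with $S=\varphi(T)$ polynomially bounded, $T$ recovered as a norm limit of polynomials in $S$, and finally Theorem~\ref{thm:AM}. The only deviation is in proving $\TT\subset\sigma(S)$: the paper uses $q_n(\lambda)\in\sigma(q_n(T))$ together with upper semicontinuity of the spectrum, whereas you note that $\lambda\in\partial\sigma(T)$ is an approximate eigenvalue and carry the approximate eigenvectors through the bounded calculus, which is equally valid and even places $\varphi(\lambda)$ in the approximate point spectrum of $S$.
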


\begin{proof}
It is well-known that $W(T)$ is a convex subset of the complex plane. Suppose first that the interior $\Int W(T)$ is empty.
So $W(T)$ is contained in a straight line. Thus there exist complex numbers $\eta, a$ such that $|\eta|=1$ and 
$\{\eta z+a:z\in W(T)\}$ is contained in the real axis.

Let $S:=\eta T+ aI$, where $I$ is the identity operator on $H$. Then $W(S)\subset\RR$, and so $S$ is a selfadjoint operator.
Hence $S$ has a nontrivial invariant subspace, which is also invariant for the operator $T=\eta^{-1}(S- aI)$.

Suppose now that $\Int W(T)\ne\emptyset$, and put $G=\Int W(T)$. Since $W(T)$ is convex, one has
$\overline{G}=\overline{W(T)}$ and $\partial G=\partial W(T)$. By the spectral inclusion
$\sigma(T)\subset\overline{W(T)}$, it follows that $\sigma(T)\subset\overline{G}$.

By 
Theorem \ref{crouzeix},
$\overline G$ is a $K$-spectral set for $T$ (with $K=1+\sqrt2$). Using Mergelyan's theorem and taking into account the convexity of $G,$ we infer 
that 
 the polynomial calculus for $T$ extends by continuity to the whole of $A(\overline G)$
and we have
$$
\|f(T)\|\le K\|f\|_{\overline G}
$$
for all $f\in A(\overline G).$

The final step essentially follows \cite[Remark]{KL}. We give a short
argument to make explicit how the geometry supplied by the numerical-range
assumption and the conformal reduction allow one to apply Theorem
\ref{thm:AM}.
By Carath\'eodory's theorem there exists a conformal mapping $\phi:G\to\DD$
which extends to a homeomorphism
$$
\phi:\overline G\to\overline{\DD}.
$$
Thus $\phi\in A(\overline G)$, and we may define a bounded operator
$$
S:=\phi(T).
$$
We claim that $\mathbb T \subset\sigma(S)$. Indeed, let $\lambda\in\partial G$. Then $\lambda\in\partial W(T)\subset\sigma(T)$ by assumption. Choose polynomials $(q_n)_{n\ge1}$ such that $q_n\to\phi$ uniformly on $\overline G$ as $n\to\infty$. By the polynomial spectral mapping theorem,
$$
q_n(\lambda)\in\sigma(q_n(T)),\qquad n\ge1.
$$
Since $q_n(T)\to S$ in norm and $q_n(\lambda)\to\phi(\lambda)$ as $n \to \infty$, upper semi-continuity of the spectrum
yields
$$
\phi(\lambda)\in\sigma(S).
$$
As $\phi(\partial G)=\mathbb T$, this proves $\mathbb T \subset\sigma(S)$.

For each polynomial $p$ we have
$$
\|p(S)\|=\|(p\circ \phi)(T)\|\le K\|p\circ\phi\|_{\overline G}=K\|p\|_{\overline{\DD}}.
$$
Hence $S$ is polynomially bounded. Since $\mathbb T \subset\sigma(S)$, Theorem \ref{thm:AM} implies that $S$ has a nontrivial invariant subspace $M$.
The inverse map $\phi^{-1}:\overline{\DD}\to\overline G$ belongs to $A(\overline{\DD})$. By polynomial boundedness of $S$, the polynomial calculus of $S$ extends to $A(\overline{\DD})$, and so $\phi^{-1}(S)$ is well-defined. If $(p_n)_{n\ge1}$ are polynomials converging uniformly to $\phi^{-1}$ on $\overline{\DD}$, then
$$
p_n(S)=(p_n\circ\phi)(T)\longrightarrow T \qquad \text{in norm}
$$
as $n\to\infty$, because $p_n\circ\phi\to\phi^{-1}\circ\phi$ uniformly on $\overline G$. Hence $T=\phi^{-1}(S)$. Since $M$ is invariant under $S$, it is invariant under polynomials in $S$ and therefore under $\phi^{-1}(S)=T$. Thus $T(M)\subset M$.
\end{proof}

For a compact set $K\subset\CC$, we write
\[
    \widehat K=\{z\in\CC: |p(z)|\le \sup_K |p|\hbox{ for every polynomial }p\}
\]
for its polynomial hull. We shall use the following elementary reformulation of the hypothesis in
Theorem \ref{t1}. In the non-degenerate case it identifies the condition with
a statement about the polynomial hull of the spectrum.

\begin{lemma}\label{lem:hull-numerical-range}
Let $T\in\BH$ and suppose that $\operatorname{Int} W(T)\ne\emptyset$.
Then
\[
        \partial W(T)\subset\sigma(T)
        \quad\Longleftrightarrow\quad
        \widehat{\sigma(T)}=\overline{W(T)}.
\]
\end{lemma}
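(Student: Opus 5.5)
The argument rests on two standard facts about polynomial hulls of compact sets: they are monotone, and for a compact set $L$ one has $\partial\widehat L\subset\partial L\subset L$. Set $X:=\overline{W(T)}$, a compact convex set with nonempty interior. By the spectral inclusion, $\sigma(T)\subset X$. Since $X$ is convex it is polynomially convex, so $\widehat X=X$, and monotonicity gives $\widehat{\sigma(T)}\subset X$ unconditionally. Each direction of the equivalence then reduces to comparing boundaries.

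($\Rightarrow$) Assume $\partial W(T)\subset\sigma(T)$. Because $W(T)$ is convex with nonempty interior, $\partial W(T)=\partial X$, so $\partial X\subset\sigma(T)$. Applying the hull operation and monotonicity gives $\widehat{\partial X}\subset\widehat{\sigma(T)}$. It remains to show $\widehat{\partial X}=X$. The inclusion $\widehat{\partial X}\subset\widehat X=X$ is immediate. For the reverse, the maximum modulus principle on the bounded open set $\Int X$, whose closure is $X$ and whose boundary is $\partial X$, gives $|p(z)|\le\sup_{\partial X}|p|$ for every $z\in X$ and every polynomial $p$. Hence $X\subset\widehat{\partial X}$. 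Combining these, $X\subset\widehat{\sigma(T)}\subset X$.

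($\Leftarrow$) Assume $\widehat{\sigma(T)}=X$. Then $\partial X=\partial\widehat{\sigma(T)}$. I will prove the standard fact $\partial\widehat L\subset L$ for every compact $L$. Take $z\in\partial\widehat L$ and suppose $z\notin L$. Since $z\in\widehat L$, $z$ lies in a bounded component $U$ of $\CC\setminus L$. We know $\widehat L$ is $L$ together with all bounded components of $\CC\setminus L$; alternatively, the maximum principle on $U$, whose boundary lies in $L$, shows directly that $U\subset\widehat L$. Either way $U\subset\widehat L$. Since $U$ is open, $z$ is then an interior point of $\widehat L$, which contradicts $z\in\partial\widehat L$. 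Applying this with $L=\sigma(T)$ gives $\partial W(T)=\partial X\subset\sigma(T)$.

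The only step with real content is the characterization of the polynomial hull used in ($\Leftarrow$), namely that bounded complementary components lie in the hull. That follows from the maximum principle, as sketched, so I expect no serious obstacle. The hypothesis $\Int W(T)\ne\emptyset$ is used only to identify $\partial W(T)$ with $\partial X$ and to apply the maximum principle on $\Int X$ in ($\Rightarrow$).
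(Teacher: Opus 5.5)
Your proof is correct and follows the paper's argument: both directions compare boundaries, using $\widehat{\partial X}=X$ for the forward implication (you check this via the maximum principle on $\Int X$, where the paper invokes the ``filling in bounded components'' description) and $\partial\widehat L\subset L$ for the converse. One small point: the part of that standard description your ($\Leftarrow$) step really depends on is that $\widehat L$ does not meet the unbounded component of $\CC\setminus L$, which is a Runge-type fact rather than a maximum-principle fact. So your closing remark slightly misplaces where the content lies, though citing the characterization, as both you and the paper do, covers it.
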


\begin{proof}
Put $C=\overline{W(T)}$. Since $W(T)$ is convex and has nonempty
interior, $C$ is a compact convex set with $\partial W(T)=\partial C$.
Moreover $\sigma(T)\subset C$. Suppose first that
$\partial W(T)\subset\sigma(T)$, that is, $\partial C\subset\sigma(T)$.
Then
\[
        \widehat{\partial C}\subset\widehat{\sigma(T)}\subset\widehat C=C .
\]
Since $C$ is compact and convex with nonempty interior,
$\widehat{\partial C}=C$: indeed, $C$ is obtained from $\partial C$ by
filling in the bounded component $\operatorname{Int} C$ of
$\CC\setminus\partial C$. Hence $\widehat{\sigma(T)}=C$.

Conversely, suppose that $\widehat{\sigma(T)}=C$. For every compact plane
set $K$, the set $\widehat K$ is obtained from $K$ by filling in the bounded
components of $\CC\setminus K$.  In particular, one has
$\partial\widehat K\subset K$. Applying this to $K=\sigma(T)$ gives
\[
        \partial W(T)=\partial C=\partial\widehat{\sigma(T)}
        \subset\sigma(T).
\]
\end{proof}

If $T$ is a contraction satisfying $\TT\subset\sigma(T)$, then $W(T)\subset\overline{\DD}$ and, since $\sigma(T)\subset\overline{W(T)}$, the closed convex set $\overline{W(T)}$ contains $\TT$. Hence
$$
\overline{W(T)}=\overline{\DD},
$$
and therefore $\partial W(T)=\TT\subset\sigma(T)$. Thus Theorem \ref{t1} is a formal generalization of the famous result in \loccite{Corollary~1.2}{BChP} stating that any contraction with spectrum containing the unit circle has a nontrivial invariant subspace.

\subsection{A two-sided variant}
\label{subsec:two-sided-variant}

The proof of Theorem \ref{t1} also gives a two-sided conclusion in a simple
additional situation. This will be useful to keep in mind when comparing the
one-sided invariant-subspace conclusion with the translation-invariant, or
bi-invariant, problem.

\begin{corollary}\label{cor:two-sided-zero-away}
Let $T\in\BH$ be invertible. Suppose that
\[
    \partial W(T)\subset \sigma(T)
    \qquad\hbox{and}\qquad
    0\notin \overline{W(T)} .
\]
Then there exists a nontrivial subspace of $H$ which is invariant under
both $T$ and $T^{-1}$.
\end{corollary}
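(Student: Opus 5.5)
The plan is to show that the subspace $M$ produced in the proof of Theorem~\ref{t1} is already invariant under $T^{-1}$. The key observation is that $0\notin\overline{W(T)}=\overline G$, so the function $z\mapsto 1/z$ is analytic on a neighbourhood of the compact convex set $\overline G$. Hence it belongs to $A(\overline G)$, and it can be approximated uniformly on $\overline G$ by polynomials.

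First I dispose of the degenerate case $\Int W(T)=\emptyset$. There $T=\eta^{-1}(S-aI)$ with $S$ selfadjoint, so $T$ is normal. A normal invertible operator has plenty of reducing subspaces: take a spectral subspace of $T$ associated with a Borel subset of $\sigma(T)$. If $\sigma(T)$ is a single point, then $T$ is a scalar multiple of $I$ and every subspace works, since $\dim H=\infty$. Spectral subspaces reduce $T$, and so they are invariant under $T^{-1}$.

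In the main case $G=\Int W(T)\ne\emptyset$, I run the proof of Theorem~\ref{t1} verbatim. This gives $S=\phi(T)$ and a nontrivial subspace $M$ invariant under $S$. The argument there also shows that $T=\phi^{-1}(S)$ is a norm limit of polynomials in $S$, so $M$ is invariant under $T$ and hence under every polynomial in $T$. Now choose polynomials $r_n$ with $r_n(z)\to 1/z$ uniformly on $\overline G$; this is possible by Runge's theorem, or by Mergelyan's theorem, since $\overline G$ is convex and hence has connected complement. By the $K$-spectral estimate of Theorem~\ref{crouzeix}, the sequence $r_n(T)$ is Cauchy in norm and converges to $f(T)$, where $f(z)=1/z$ and $f(T)$ is given by the $A(\overline G)$-calculus.

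It remains to check that $f(T)=T^{-1}$. Since $zr_n(z)\to 1$ uniformly on $\overline G$, the spectral estimate gives $Tr_n(T)\to I$ in norm. Also $Tr_n(T)\to Tf(T)$. Hence $Tf(T)=I$, and similarly $f(T)T=I$, so $f(T)=T^{-1}$. As each $r_n(T)$ leaves the closed subspace $M$ invariant, so does the norm limit $T^{-1}$.

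The only real point is the identification of $f(T)$ with $T^{-1}$ through the continuity of the functional calculus, which the argument above handles; the rest is a straightforward reuse of Theorem~\ref{t1}. Note that the hypothesis $0\notin\overline{W(T)}$ is essential here: it is what makes $1/z$ a uniform limit of polynomials on $\overline G$.
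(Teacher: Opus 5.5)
Your proof is correct and follows essentially the same argument as the paper. You treat the degenerate case via the spectral theorem for the normal operator $T$. In the main case you show that the subspace $M$ from Theorem~\ref{t1} is invariant under $T^{-1}$, because $z^{-1}\in A(\overline{G})$ when $0\notin\overline{G}$. The one cosmetic difference is how $T^{-1}$ is reached: you approximate $z^{-1}$ by polynomials in $T$ directly and check $f(T)=T^{-1}$ explicitly. The paper instead writes $T^{-1}=g(S)$ with $g=(z^{-1})\circ\phi^{-1}\in A(\overline{\DD})$ and invokes invariance of $M$ under the $A(\overline{\DD})$-calculus of $S$.
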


\begin{proof}
If $\Int W(T)=\emptyset$, then, as in the proof of Theorem \ref{t1}, an affine
transform of $T$ is selfadjoint. Since $T$ is invertible, the spectral theorem
then gives a nontrivial reducing subspace for $T$, unless $T$ is scalar. In the
scalar case any nontrivial subspace will do. In either case the subspace
is invariant under both $T$ and $T^{-1}$.

Assume now that $\Int W(T)\ne\emptyset$ and put $G=\Int W(T)$. In the proof of
Theorem \ref{t1} we constructed a conformal map
$\phi:G\to\mathbb D$, extending homeomorphically to the closures, and the
operator $S=\phi(T)$. The subspace obtained there is invariant under the
$A(\overline{\mathbb D})$-functional calculus of $S$: indeed this follows by applying
polynomial approximation to functions in $A(\overline{\mathbb D})$. Since $0\notin\overline{G}$, the
function $z\mapsto z^{-1}$ belongs to $A(\overline G)$. Hence
\[
    T^{-1}=(z^{-1})(T)=g(S),
    \qquad
    g=(z^{-1})\circ\phi^{-1}\in A(\overline{\mathbb D}).
\]
Thus the same subspace is invariant under $T^{-1}$ as well as under $T$.
\end{proof}

\section{Hyponormal operators}
In this section, we elaborate applications of Theorem \ref{t1} to the study of hyponormal operators.
Recall that
an operator $T\in\BH$ is called hyponormal if $T^*T\ge TT^*$. Equivalently, $\|Tx\|\ge\|T^*x\|$ for all $x\in H$.
The class of hyponormal operators is classical and has a rich invariant-subspace and spectral theory; see, for instance, \cite[Ch.~IV]{MP}.
However, it is still not known whether each hyponormal operator has a nontrivial invariant subspace.

We recall several properties of hyponormal operators needed below.
Letting $\conv A$ stand for the convex hull of $A \subset \mathbb C,$ note first that each hyponormal operator satisfies
\begin{equation}\label{convexoid-hyp}
\overline{W(T)}=\conv\sigma(T),
\end{equation}
see \loccite{Theorem~2}{S}. In other words, every hyponormal operator is convexoid.
In view of \eqref{convexoid-hyp}, the hypothesis of Theorem \ref{t1}
is equivalent to
\begin{equation}\label{boundary-conv-spectrum}
\partial\conv\sigma(T)\subset\sigma(T).
\end{equation}
Indeed, if $W(T)$ has nonempty interior, then
\[
\partial W(T)=\partial\overline{W(T)}=\partial\conv\sigma(T),
\]
while, if $W(T)$ has empty interior, then
\[
\partial W(T)=\overline{W(T)}=\conv\sigma(T),
\]
and the two conditions are again the same. By Lemma
\ref{lem:hull-numerical-range}, in the non-degenerate case the same condition
can also be written as
\begin{equation}\label{hull-conv-spectrum}
        \widehat{\sigma(T)}=\conv\sigma(T).
\end{equation}

Before stating the corollary, let us isolate the simple boundary set used below. For an operator $T$, denote by $S(T)$ the set of all points $\lambda\in\partial W(T)$ which belong to the closure of a nontrivial straight line segment contained in $\partial W(T)$. Thus $S(T)$ is the flat part of the boundary of the numerical range, together with its endpoints. In the convexoid situation \eqref{convexoid-hyp}, the inclusion
\[
    S(T)\subset\sigma(T)
\]
is a simple sufficient spectral condition. The case $S(T)=\emptyset$ is the particularly transparent special case where there are no nontrivial straight pieces on $\partial W(T)$.

\begin{corollary}\label{c1}
Let $T\in\BH$ be a hyponormal operator. If
$$
S(T)\subset\sigma(T),
$$
then $T$ has a nontrivial invariant subspace. In particular, this holds whenever $S(T)=\emptyset$.
Equivalently, the same conclusion holds under the condition
$$
\partial\conv\sigma(T)\subset\sigma(T).
$$
\end{corollary}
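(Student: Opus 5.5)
The plan is to reduce the corollary to Theorem \ref{t1}. For a hyponormal $T$ we need to check that the hypothesis $S(T)\subset\sigma(T)$ forces $\partial W(T)\subset\sigma(T)$. The equivalence with $\partial\conv\sigma(T)\subset\sigma(T)$ was already explained before the statement via \eqref{convexoid-hyp}.

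First I dispose of the degenerate case. If $\Int W(T)=\emptyset$, then, as in the proof of Theorem \ref{t1}, an affine transform of $T$ is selfadjoint, and we are done directly. In fact a hyponormal operator with real numerical range is selfadjoint, since the affine transform is still hyponormal. So assume $\Int W(T)\neq\emptyset$ and put $C=\overline{W(T)}=\conv\sigma(T)$, a compact convex set with nonempty interior and $\partial C=\partial W(T)$.

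The key step is to show that every point $\lambda\in\partial C\setminus S(T)$ lies in $\sigma(T)$.

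1. Such a $\lambda$ does not lie in the closure of any nontrivial segment contained in $\partial C$. Hence $\lambda$ is an extreme point of $C$: if $\lambda$ were an interior point of a segment $[a,b]\subset C$, a supporting line at $\lambda$ would contain $[a,b]$, and so $[a,b]\subset\partial C$, a contradiction.
2. Extreme points of the convex hull of a compact set belong to that set. This is the classical fact that $\conv K$ is compact for compact $K\subset\CC$ (Carathéodory), and every point of $\conv K$ is a convex combination of points of $K$, so an extreme point must itself be one of those points.

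It follows that $\lambda\in\sigma(T)$.

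Combining this with the hypothesis $S(T)\subset\sigma(T)$ gives $\partial W(T)=\partial C\subset\sigma(T)$, and Theorem \ref{t1} yields a nontrivial invariant subspace. The special case $S(T)=\emptyset$ is immediate.

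For the final ``equivalently'' clause, one direction is trivial: $\partial\conv\sigma(T)\subset\sigma(T)$ implies $S(T)\subset\partial W(T)=\partial\conv\sigma(T)\subset\sigma(T)$ in the nondegenerate case. The converse is exactly the argument above. In the degenerate case both conditions hold trivially, because $T$ is (an affine image of) a selfadjoint operator whose spectrum is an interval... Strictly, $\conv\sigma(T)$ need not equal $\sigma(T)$ there, but the conclusion still holds, so I will phrase the equivalence as one at the level of the sufficient condition, matching the discussion preceding the corollary.

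The main obstacle I expect is the careful justification that non-$S(T)$ boundary points are extreme points. This needs $S(T)$ to include the endpoints of flat pieces: those endpoints are extreme but are not automatically known to be in the spectrum by any other route, so they are covered by the hypothesis. The rest is a direct application of Theorem \ref{t1} and \eqref{convexoid-hyp}.
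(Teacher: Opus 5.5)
Your proof is correct and is essentially the paper's argument: a point of $\partial\conv\sigma(T)$ outside $S(T)$ is shown, via a supporting line, to be an extreme point of $\conv\sigma(T)$ and hence to lie in $\sigma(T)$, and Theorem~\ref{t1} is then applied through \eqref{convexoid-hyp}. Two of your side remarks are wrong but do not affect the proof. First, in the degenerate case the two conditions do not hold trivially; for example, neither holds for a selfadjoint $T$ with $\sigma(T)=\{0,1\}$. They do still coincide there, since $S(T)=\partial W(T)=\conv\sigma(T)=\partial\conv\sigma(T)$ when $W(T)$ is a nontrivial segment. Second, the endpoints of maximal flat pieces are extreme points, so your step~2 already places them in $\sigma(T)$; the hypothesis is really only needed on the relative interiors of the edges.
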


\begin{proof}
It is enough, in view of \eqref{boundary-conv-spectrum} and Theorem \ref{t1}, to prove that $S(T)\subset\sigma(T)$ implies
$$
\partial\conv\sigma(T)\subset\sigma(T).
$$
Put $K=\sigma(T)$. By \eqref{convexoid-hyp}, $\partial\conv K=\partial\overline{W(T)}$, and $W(T)$ and $\overline{W(T)}$ have the same boundary. Let $\lambda\in\partial\conv K$. If $\lambda$ is an extreme point of the compact convex set $\conv K$, then $\lambda\in K$ (e.g. by the elementary finite-dimensional form of the Krein--Milman theorem). If $\lambda$ is not an extreme point, choose a supporting line to $\conv K$ at $\lambda$. The corresponding face contains a nontrivial line segment in $\partial\conv K=\partial W(T)$ whose closure contains $\lambda$. Hence $\lambda\in S(T)\subset\sigma(T)=K$. Thus $\partial\conv\sigma(T)\subset\sigma(T)$, and the conclusion follows. The converse implication from $\partial\conv\sigma(T)\subset\sigma(T)$ to $S(T)\subset\sigma(T)$ is immediate, since $S(T)\subset\partial W(T)=\partial\conv\sigma(T)$.
\end{proof}

\begin{remark}\label{r-convex-spectrum}
A convenient sufficient condition is formulated in terms of the polynomial
hull. If $K\subset\CC$ is compact, then $\widehat K\subset\conv K$.
Hence, if $\widehat K$ is convex, then $\widehat K=\conv K$, since
$\widehat K$ is then a convex set containing $K$. Moreover
$\partial\widehat K\subset K$, and hence $\partial\conv K\subset K$ in this
case. Therefore, if $T$ is hyponormal and $\widehat{\sigma(T)}$ is convex,
then $T$ has a nontrivial invariant subspace by Corollary \ref{c1}. This
polynomial-hull formulation covers, for instance, spectra obtained by removing
holes from a convex compact set.
\end{remark}

To provide explicit examples of hyponormal operators with nontrivial invariant subspaces by means of our criterion, we shall use the next realization result from \loccite{Corollary~1}{CP1}.
\begin{theorem}\label{p-putnam-realization}
Let $K\subset\CC$ be a nonempty compact set. Then $K$ is the spectrum of a completely non-normal hyponormal operator if and only if
\begin{equation}\label{locmeas}
    m_2(K\cap N)>0
\end{equation}
for every open disc $N\subset\CC$ such that $K\cap N\ne\emptyset$, where $m_2$ denotes planar Lebesgue measure.
\end{theorem}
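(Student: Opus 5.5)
The statement is the realization result \loccite{Corollary~1}{CP1}, which the paper quotes rather than proves. Here is how I would prove it, treating the two directions separately.

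\emph{Necessity.} Let $T$ be completely non-normal hyponormal with $\sigma(T)=K$. The key tool is Putnam's inequality
\[
\|T^*T-TT^*\|\le \frac{1}{\pi}\,m_2(\sigma(T)),
\]
together with its local version. Suppose $N$ is an open disc with $K\cap N\ne\emptyset$ but $m_2(K\cap N)=0$. Then $K\cap N$ is a nonempty relatively clopen piece of $K$ only after suitable shrinking, so I would first replace $N$ by a slightly smaller disc $N'$ with $\partial N'\cap K$ avoided. This works because, for a.e.\ radius, the circle meets $K$ in a null set, and it is possible to arrange that it misses $K$, or else one uses a local spectral subspace instead. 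Via the Riesz idempotent, or via Putnam's local spectral subspaces $H(K\cap\overline{N'})$, one obtains a nonzero invariant subspace $M$ on which $T|_M$ is hyponormal with spectrum of planar measure zero. Putnam's inequality gives $T|_M$ normal. A normal restriction of a hyponormal operator to an invariant subspace is reducing, by the standard lemma of Stampfli. So $T$ has a normal summand, contradicting complete non-normality.

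The delicate point in this direction is producing a spectral subspace when $K\cap N$ is not clopen in $K$. I would handle it with Putnam's theory of spectral subspaces for hyponormal operators, which uses property $(\beta)$ or Dunford's condition $(C)$ for the restriction.

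\emph{Sufficiency.} Given $K$ satisfying \eqref{locmeas}, I would build the operator explicitly. Take $T=M_z$ acting on a model space adapted to $K$; the natural candidate is the Pincus--Carey principal-function construction. For every measurable $g$ with $0\le g\le 1$ and compact support, there is a hyponormal operator with one-dimensional self-commutator whose principal function is $g$, and whose spectrum is the essential support of $g$ (closure of the Lebesgue density points). Choose $g=\chi_K$. Condition \eqref{locmeas} says exactly that every point of $K$ lies in the essential support of $\chi_K$, so the spectrum equals $K$. The resulting operator is irreducible, hence completely non-normal. If one prefers a non-principal-function route, an alternative is a direct sum $\bigoplus_j T_j$ of cnn hyponormal operators whose spectra are closed discs $D_j\subset K$ accumulating densely. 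This route, however, requires $K$ to contain discs, which fails for fat Cantor-type sets, so the principal-function route is preferable.

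The main obstacle is the spectral identification $\sigma(T)=\operatorname{ess\,supp} g$ in the Pincus--Carey model. I would obtain it from the determining function $E(\lambda,\mu)=\exp\bigl(\frac{1}{2\pi i}\int \frac{g(z)}{z-\lambda}\,\frac{dm_2}{\ldots}\bigr)$-type formula and known results.
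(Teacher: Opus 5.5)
\textbf{Sufficiency.} The paper does not prove this statement; it quotes it from Clancey--Putnam. Your sufficiency half is a sound alternative to their construction. The Carey--Pincus operator with principal function $\chi_K$ is pure with rank-one self-commutator. Once you cite the identification $\sigma(T)=\operatorname{ess\,supp} g$ for that operator (rather than the incompletely written determining-function formula), the local area condition gives $\sigma(T)=K$. This route also gives the rank-one refinement for free.

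\textbf{Necessity has a genuine gap at the localization step.}
\begin{itemize}
\item You cannot in general shrink $N$ so that $\partial N'$ misses $K$. If $K\cap N$ contains a segment crossing $N$, every disc $N'\subset N$ meeting $K$ has boundary meeting that segment. So no Riesz idempotent is available.
\item The local spectral subspace $H_T(F)$, $F=\overline{N'}$, is closed and satisfies $\sigma(T|_{H_T(F)})\subset F\cap K$ by $(\beta)$/$(C)$. But you give no reason why $H_T(F)\neq\{0\}$, and for hyponormal operators it can vanish. For the unilateral shift $S$ on $H^2$, suppose $(S-\lambda)f(\lambda)=x$ on an open set $U$ meeting $\DD$. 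Evaluating at $z=\lambda$ gives $x(\lambda)=0$ on $U\cap\DD$, so $x=0$. Hence $\sigma_S(x)=\overline{\DD}$ for every $x\neq0$, and $H_S(F)=\{0\}$ for every closed disc $F\subset\DD$, even though $F$ meets $\sigma(S)$.
\end{itemize}
So the hypothesis $m_2(K\cap N)=0$ must be used to \emph{produce} a nonzero localized piece. In your sketch it only enters afterwards, through Putnam's inequality. Producing that piece is exactly the content of Putnam's local area theorem, and $(\beta)$ alone does not supply it.

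\textbf{One way to localize.} For $x\in H$ let $M_x$ be the closed span of the vectors $r(T)x$, with $r$ rational and poles off $K$.
\begin{itemize}
\item $M_x$ is invariant under $(T-\mu)^{-1}$ for $\mu\notin K$, so $\sigma(T|_{M_x})\subset K$.
\item Some $M_x$ has spectrum meeting $N$. Otherwise $T-\lambda$ would be bijective for $\lambda\in K\cap N$.
\item $T|_{M_x}$ is rationally cyclic, so by Berger--Shaw it has trace-class self-commutator. Split it as normal $\oplus$ pure.
\item The principal function of the pure part vanishes a.e.\ off its spectrum, which lies in $K$, hence a.e.\ on $N$.
\end{itemize}
Now you need one area-sensitive fact: a pure hyponormal operator with trace-class self-commutator has spectrum equal to the essential support of its principal function. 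This is the trace-class analogue of the identification your sufficiency argument uses. With it, the pure part's spectrum misses $N$, so the normal summand is nonzero. Stampfli's lemma then makes that summand a normal reducing subspace of $T$, which contradicts complete non-normality. Some input of this kind, or a direct appeal to Putnam's theorem, is indispensable.
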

\begin{remark}
\label{r-carey-pincus}
Theorem \ref{p-putnam-realization} has also a refined form.
In particular, the construction may be arranged with
rank-one self-commutator; see \loccite{Theorem~1 and Corollary~1}{CP1}.
This refinement is not needed below, except as evidence that the spectral
examples may be chosen close to normal operators. For the background behind these realization results, see \cite[Chs.~IX--XI]{MP}.
\end{remark}
Here ``completely non-normal'' means that the operator has no nonzero reducing subspace on which its restriction is normal.
Clearly, in view of the invariant subspace problem,
 the complete non-normality assumption can be made without loss of generality.

Thus Theorem \ref{p-putnam-realization} is an existence theorem.
It supplies many completely non-normal hyponormal operators satisfying Corollary \ref{c1}. Indeed, if a compact set $K$ satisfies \eqref{locmeas} 
and its polynomial hull $\widehat K$ is convex, then Theorem \ref{p-putnam-realization} gives a completely non-normal hyponormal operator $T$ with
$
    \sigma(T)=K,
$
and in view of Remark \ref{r-convex-spectrum}, 
$$
    \partial\conv\sigma(T)\subset\sigma(T).
$$
Hence $T$ has a nontrivial invariant subspace by Corollary \ref{c1}.

The next result is basic in the theory of invariant subspaces for hyponormal operators,
and we shall use it by comparing to Theorem \ref{t1} in several instances.
Here $C(K)$ denotes the algebra of continuous functions on a compact set
$K\subset\CC$, and $R(K)$ denotes the uniform closure on $K$ of rational
functions with poles off $K$.
\begin{theorem}[Brown]\label{thm:brown}
Let $T\in \BH$ be  hyponormal. If $R(\sigma(T))\ne C(\sigma(T))$, then $T$ has a nontrivial invariant subspace. In particular, the conclusion holds whenever $\operatorname{Int}\sigma(T)\ne\emptyset$.
\end{theorem}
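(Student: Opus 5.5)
This is Brown's theorem, a classical result that the paper quotes rather than proves. I would follow the standard route through Thomson-type approximation, or more directly through Brown's original scalar-measure argument combined with Putnam's inequality.

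\emph{Step 1: find a non-trivial annihilating measure.} Put $K=\sigma(T)$. Since $R(K)\ne C(K)$, the Hahn--Banach theorem gives a nonzero complex Borel measure $\mu$ on $K$ that annihilates $R(K)$. For the ``in particular'' clause I would reduce to this case directly. If $\Int K\ne\emptyset$, choose a small closed disc $D\subset\Int K$. Then $\bar z$ restricted to $K$ cannot be a uniform limit of rational functions with poles off $K$, since such limits are analytic on $\Int K$. Hence $R(K)\ne C(K)$.

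\emph{Step 2: functional model and Putnam's inequality.} Assume first that $T$ has no nontrivial invariant subspace. Then every nonzero $x\in H$ is cyclic for $T$, and also for the rational functional calculus $r(T)$, $r$ with poles off $K$. Hyponormality gives $\|(T-\lambda)^{-1}\|=1/\operatorname{dist}(\lambda,\sigma(T))$. More importantly, Putnam's inequality $\|T^*T-TT^*\|\le \pi^{-1}m_2(\sigma(T))$ together with the Berger--Shaw and Brown machinery yields the key estimate for $x,y$ in a dense set:
\[
|\langle r(T)x,y\rangle|\le C\int_K |r|\,d\nu \quad\text{(for a suitable finite measure }\nu\text{)}.
\]
Concretely, one uses the Cauchy transform and the Putnam--Xia ``principal function'' representation. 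The trace formula $\operatorname{tr}[p(T),q(T)^*]=\pi^{-1}\int \partial p\,\overline{\partial q}\,g\,dm_2$ applies to rank-constrained commutators. From it I would extract a weak-star bounded map
\[
R(K)\to \mathcal B(H), \qquad r\mapsto r(T),
\]
which extends to $R^\infty$ of a suitable measure. This extension is continuous with respect to the bounded-pointwise topology on the set of nonpeak points of $R(K)$, a set that is nonempty by Step 1.

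\emph{Step 3: Scott Brown's technique.} With such a weak-star continuous functional calculus on an algebra $H^\infty(U)$, where $U$ is a nonempty open set of ``analytic'' points, I would show the following. Every weak-star continuous functional $\varphi$ of the form $f\mapsto f(\lambda)$ with $\lambda\in U$ can be approximated as $\varphi(f)=\langle f(T)x,y\rangle$. This uses the hyponormal estimate $\|(T-\lambda)^{-1}x\|$ along dominating sequences (Brown's lemma on dominating sets). Then $x$ and $y$ satisfy $\langle (T-\lambda)p(T)x,y\rangle=0$ for all polynomials $p$. Hence $\overline{\{(T-\lambda)p(T)x\}}$ is a nontrivial invariant subspace, unless it is all of $H$; the latter would contradict $y\perp$ it with $y\ne0$. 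The case where $T-\lambda$ has dense range failing is trivial.

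\emph{Main obstacle.} The hard step is Step 2. The difficulty is producing weak-star continuity of the functional calculus from hyponormality, which requires Putnam's inequality applied to restrictions $T|_{M}$ and the Thomson/Brown analysis of bounded point evaluations. I would first try to use the simpler sufficient case $\Int\sigma(T)\ne\emptyset$. There the domination $\|f(T)\|\le \|f\|_{\sigma(T)}$ for hyponormal-compatible rational $f$ (von Neumann-type, since hyponormal operators have $\sigma(T)$ as a spectral set) gives the $H^\infty(\Int K)$ calculus directly, and Brown's dominating-set method finishes.
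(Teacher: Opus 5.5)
The paper gives no proof of this statement. It quotes it from Brown and uses it only for comparison, so your proposal has to stand on its own. Two peripheral steps are fine. The ``in particular'' reduction is correct: uniform limits of rational functions with poles off $K$ are analytic on $\Int K$, so $\bar z|_K\notin R(K)$. The endgame of Step~3 is also correct, provided you obtain an \emph{exact} identity $\langle f(T)x,y\rangle=f(\lambda)$; an approximate representation does not make $y$ orthogonal to $(T-\lambda)p(T)x$.

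\textbf{Step 2 is the genuine gap.} It is the whole content of Brown's theorem, and you only name it; appealing to ``the Brown machinery'' there is circular. The displayed estimate $|\langle r(T)x,y\rangle|\le C\int_K|r|\,d\nu$ is not derived, and the tools you cite do not give it. Berger--Shaw (cyclicity makes $[T^*,T]$ trace class) plus the Helton--Howe/Carey--Pincus formula control trace-class functionals such as $r\mapsto\operatorname{tr}(r(T)[T^*,T])=\pi^{-1}\int r\,g\,dm_2$. They do not control rank-one functionals $\langle r(T)x,y\rangle$, and turning such trace information into rank-one representations of point evaluations is exactly what has to be proved. Nor would bounds with constants depending on $x,y$ yield what you then claim to extract, a bounded map $R(K)\to\mathcal B(H)$, $r\mapsto r(T)$. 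That is the assertion that $\sigma(T)$ is a $K$-spectral set for $T$.

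\textbf{That assertion is false for hyponormal operators,} and your thick-spectrum fallback uses it explicitly. Let $T$ be the completely non-normal hyponormal operator of Example~\ref{ex-non-dominating-hyponormal}, for which $R(\sigma(T))=C(\sigma(T))$. Suppose $\|r(T)\|\le C\|r\|_{\sigma(T)}$ held.
\begin{enumerate}
\item The rational calculus would extend to a bounded unital homomorphism $C(\sigma(T))\to\mathcal B(H)$.
\item Such homomorphisms are similar to $*$-homomorphisms, so $TS=SN$ with $S$ invertible and $N$ normal.
\item The Fuglede--Putnam theorem for hyponormal operators gives $T^*S=SN^*$, so $T$ is normal, a contradiction.
\end{enumerate}
A normal direct summand whose spectrum is a disjoint disc, together with Runge's theorem, turns this into a counterexample with thick spectrum.

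What hyponormality does supply is $\|(T-\lambda)^{-1}\|=1/\operatorname{dist}(\lambda,\sigma(T))$. Via \eqref{convexoid-hyp} and Theorem~\ref{crouzeix}, it also gives a spectral-set estimate on $\conv\sigma(T)$. This is precisely why Theorem~\ref{t1} needs $\partial W(T)\subset\sigma(T)$ and cannot reach Brown's theorem.

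Even with a genuine spectral set, ``Brown's dominating-set method finishes'' hides a deep step of the size of Theorem~\ref{thm:AM}. You would need a weak-star continuous, isometric $H^\infty$ calculus and the factorization of weak-star continuous functionals by rank-one ones. A route that does work in the thick-spectrum case combines two results: Putinar's theorem that hyponormal operators are subscalar (hence have Bishop's property $(\beta)$), and the Eschmeier--Prunaru invariant subspace theorem for operators with property $(\beta)$ and thick spectrum.
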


We proceed with three examples illustrating how Theorem \ref{p-putnam-realization} and Corollary \ref{c1} apply in practice. The first two are also implied by Theorem \ref{thm:brown}, since their spectra have nonempty interior. The third is not covered by that theorem.
\begin{example}
The singular integral operators below, taken from \loccite{Section~1}{P70}, illustrate the realization picture much more concretely. See also \cite[Chapter II.2, Example 3]{MP}
for explanation of the role of these operators as models for general hyponormal operators.
We recall the form needed here. Let $q\in L^\infty(0,1), q \not \equiv 0,$ and let $M_x$ be multiplication by the independent variable $x$ on $L^2(0,1)$. Define the selfadjoint singular integral operator
$$
    (J_q f)(x)=\frac{1}{\pi i}\,q(x)\,\text{\rm p.v.}
    \int_0^1\frac{\overline{q(t)}f(t)}{t-x}\,dt,
    \qquad 0<x<1,
$$
where p.v. denotes the Cauchy principal value, and put
$$
    A_q=M_x+iJ_q.
$$
For these operators one has
$$
    A_q A_q^*-A_q^*A_q=2C_q,
    \qquad
    (C_q f)(x)=\frac{1}{\pi}q(x)
    \int_0^1\overline{q(t)}f(t)\,dt,
$$
so $C_q\ge0$. Thus $A_q$ is cohyponormal in our convention, $A_q^*$ is hyponormal,
and both have rank-one self-commutators.

In particular, let $\rho>0$ and take $q(x)\equiv\sqrt\rho$. Then
$$
    (A_\rho f)(x)=xf(x)+\frac{\rho}{\pi}\,\text{\rm p.v.}
    \int_0^1\frac{f(t)}{t-x}\,dt,
    \qquad 0<x<1,
$$
and in this case \loccite{Corollary to Theorem~4}{P70} gives
$$
    \sigma(A_\rho)=
    \{x+iy:0\le x\le1,\ -\rho\le y\le\rho\}.
$$
Consequently
$$
    T_\rho:=A_\rho^*
$$
is a (non-normal) hyponormal operator with
$$
    \sigma(T_\rho)=
    \{x+iy:0\le x\le1,\ -\rho\le y\le\rho\}.
$$
The spectrum is a rectangle, hence \(\partial\conv\sigma(T_\rho)\subset\sigma(T_\rho)\). The rectangle also satisfies the positivity condition in Theorem \ref{p-putnam-realization}, and Corollary \ref{c1} applies to $T_\rho$. Note that, moreover, $T_\rho$ 
is completely non-normal. 
Indeed, if a reducing subspace $M$ for $A_\rho$ carried a normal restriction, then $M$ would reduce $\operatorname{Re} A_\rho=M_x$. Since $M_x$ has simple spectrum, $M=L^2(E)$ for a measurable set $E\subset(0,1)$. Writing $\chi_E$ for the characteristic function of $E$, the compression of $C_\rho$ to $L^2(E)$ is
$$
    f\mapsto \frac{\rho}{\pi}\chi_E\int_E f(t)\,dt, \qquad f \in L^2(E),
$$
which vanishes only when $m(E)=0$. Thus the normal reducing part is zero. 
Since reducing subspaces for \(A_\rho\) and \(A_\rho^*\) coincide, the same
is true for \(T_\rho=A_\rho^*\).
By replacing $T_\rho$ with $\alpha+\beta T_\rho$, $\beta\neq0$, one obtains translated, rotated and dilated rectangles.
\end{example}

The next situation arises often in applications.

\begin{example}
\label{ex-annular-hyponormal}
Fix $0<r<1$ and put
\[
    K_r=\{z\in\mathbb C:r\le |z|\le1\}.
\]
Then $K_r$ has positive planar measure in every open disc meeting it and
\(
    \widehat K_r=\overline{\mathbb D}.
\)
Thus $\widehat K_r$ is convex and
\[
    \partial\conv K_r=\partial\widehat K_r=\mathbb T\subset K_r.
\]
Theorem \ref{p-putnam-realization} therefore gives completely non-normal
hyponormal operators $T$ with
\(
    \sigma(T)=K_r,
\)
and Corollary \ref{c1} applies to them. By the refinement recalled in Remark \ref{r-carey-pincus}, such annular spectra can also be realized with rank-one self-commutator.
\end{example}

The final example realizes a Cantor-like construction by hyponormal
operators. It starts with a fat Cantor set of radii and then removes a small
angular neighbourhood of the positive radius in order to make the inner
complement connected. The example shows that the present criterion is not
restricted to thick spectra and is not covered by Brown's theorem. We shall use
the following rational-approximation criterion from \cite[Ch.~VIII,
Corollary~8.4]{Gam}.
\begin{theorem}\label{thm:rational-criterion}
Let \(K\subset\CC\) be compact. Suppose that every point of \(\partial K\) belongs to the boundary of a component of \(\CC\setminus K\). Then \(R(K)=A(K)\).
In particular, $R(K)=C(K)$ whenever $\operatorname{Int}K=\emptyset$.
\end{theorem}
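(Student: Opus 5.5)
The statement to be proved is the Gamelin criterion (Theorem \ref{thm:rational-criterion}): if every point of $\partial K$ lies on the boundary of some component of $\CC\setminus K$, then $R(K)=A(K)$; in particular $R(K)=C(K)$ when $\Int K=\emptyset$. I would follow the classical route through Vitushkin's localization scheme, or more economically through its consequence, the Mergelyan-type theorem for sets whose complement has components that are not too small near each boundary point.

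The inclusion $R(K)\subset A(K)$ is immediate, so the task is to approximate a given $f\in A(K)$ uniformly on $K$ by rational functions with poles off $K$. First, extend $f$ to a continuous compactly supported function on $\CC$, smooth it by convolution, and split it with a partition of unity subordinate to a grid of squares of side $\delta$. This is Vitushkin's localization operator $T_\varphi f=\frac{1}{\pi}\int \frac{f(\zeta)-f(z)}{\zeta-z}\,\bar\partial\varphi(\zeta)\,dA(\zeta)$. Each localized piece $f_j=T_{\varphi_j}f$ has the following properties:
\begin{itemize}
\item it is analytic off $\operatorname{supp}\varphi_j$ and off $\Int K$;
\item it vanishes at $\infty$;
\item its sup norm is $O(\omega_f(\delta))$;
\item its Laurent coefficient at $\infty$ is $O(\delta\,\omega_f(\delta))$.
\end{itemize}

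Next, use the hypothesis on each square $Q_j$ that meets $\partial K$. A boundary point in $Q_j$ lies on the boundary of a complementary component $U$. Either $U$ is unbounded, or it is bounded and contains a point $a_j$ off $K$. In both cases the component exits a disc of radius comparable to $\delta$, or else it contains a continuum of diameter $\gtrsim\delta$ near $Q_j$. This gives the key geometric estimate: the analytic capacity of $(\CC\setminus K)\cap 2Q_j$ is $\gtrsim\delta$. That estimate lets one replace $f_j$ by a rational function $g_j$ with poles in $\CC\setminus K$ near $Q_j$, matching the first Laurent coefficient, and with $\|f_j-g_j\|$ decaying like $\omega_f(\delta)\,\delta^2/\operatorname{dist}^2$ away from $Q_j$. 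Summing over $j$ then gives a uniform error $O(\omega_f(\delta))$, and letting $\delta\to0$ finishes the approximation.

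I expect the main obstacle to be this matching step: constructing $g_j$ with the correct first coefficient using only connectedness of the complementary component. The precise difficulty is the lower bound on analytic capacity for connected sets, namely $\gamma(E)\ge \operatorname{diam}(E)/4$, which holds because the capacity of a continuum dominates a quarter of its diameter by Koebe. I would quote this fact rather than reprove it. The degenerate case where the component near $Q_j$ is small is the one needing care. In that case the whole component lies within $O(\delta)$ of $Q_j$ and has positive diameter, so it still carries capacity comparable to its size, and one uses a pole inside it together with a modified estimate.

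For the ``in particular'' clause, take $\Int K=\emptyset$. Then $A(K)=C(K)$, and every point of $K=\partial K$ is a limit of points of $\CC\setminus K$. It remains to check that each point lies on the boundary of some single component, which follows from the main statement's hypothesis only after verification. Alternatively, and more directly, if $\Int K=\emptyset$ one may argue that $C(K)$ is approximable because $\bar\partial$ of a smooth extension is supported on $K$ of zero area. But zero area is not implied, so I would instead rely on the same localization with capacity bounds, applied only where the component condition holds. Since this corollary is just as stated in the source, I would simply cite \cite[Ch.~VIII, Corollary~8.4]{Gam} for it.
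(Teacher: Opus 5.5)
\paragraph{There is a genuine gap.} The paper does not prove this statement; it quotes it from Gamelin, where it is derived from Vitushkin's theorem. Your sketch tries to reconstruct that derivation, but the step you call the key geometric estimate is false under the stated hypothesis.

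The hypothesis is pointwise. Each $z\in\partial K$ lies on $\partial U_z$ for some component $U_z$ of $\CC\setminus K$, but nothing bounds $\operatorname{diam}U_z$ from below uniformly in $z$. For example, let $K=\overline{\DD}\setminus\bigcup_{n\ge2}D_n$, where $D_n$ is the open disc with centre $1-2^{-n}$ and radius $4^{-n}/10$:
\begin{itemize}
\item every boundary point lies on $\TT$ or on some $\partial D_n$;
\item at scale $\delta=2^{-3n/2}$, the grid square around $1-2^{-n}$ meets $\CC\setminus K$ only in $D_n$;
\item hence $\gamma\bigl((\CC\setminus K)\cap 2Q_j\bigr)=4^{-n}/10\ll\delta$.
\end{itemize}

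So your dichotomy (the component either exits a disc of radius comparable to $\delta$, or contains a continuum of diameter $\gtrsim\delta$) fails. The ``degenerate case'' you defer to a ``modified estimate'' is exactly where the proof has to happen. To match $a_1(f_j)$ by a function analytic off compact subsets of $(\CC\setminus K)\cap CQ_j$ with norm $O(\omega_f(\delta))$, you need $|a_1(f_j)|\lesssim\omega_f(\delta)\,\gamma\bigl((\CC\setminus K)\cap CQ_j\bigr)$. What actually controls $a_1(f_j)$ is the continuous analytic capacity of $\operatorname{supp}\varphi_j\setminus\Int K$. Points of $\partial K\cap 2Q_j$ lying on other components contribute to that capacity. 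So you need a comparison $\gamma(2Q_j\setminus K)\gtrsim\alpha(2Q_j\setminus\Int K)$, uniform in $j$ and $\delta$. That is essentially Vitushkin's criterion itself, and your local continuum argument gives no way of obtaining it from the component hypothesis.

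\paragraph{The summation step also fails.} Suppose the capacity bound $\gtrsim\delta$ did hold. Matching only the first Laurent coefficient gives $|f_j-g_j|(z)\lesssim\omega_f(\delta)\,\delta^2/|z-c_j|^2$. This bound does not sum to $O(\omega_f(\delta))$. Near a density point of $\partial K$ there are $\gtrsim k$ squares meeting $\partial K$ at distance $\approx k\delta$, so the sum is of order $\omega_f(\delta)\log(1/\delta)$. This happens precisely in Example~\ref{ex-non-dominating-hyponormal}, where $K=\partial K$ has positive area in every disc. The bound $\omega_f(\delta)\log(1/\delta)$ need not tend to $0$ for $f\in A(K)$.

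This is why Vitushkin's scheme matches two coefficients. In the uniform (Mergelyan) situation that is easy. Take $g_j$ analytic off a continuum $E_j\subset\CC\setminus K$ of diameter $\gtrsim\delta$ near $Q_j$, with $\|g_j\|\le1$ and $g_j'(\infty)\gtrsim\delta$. Then $\lambda g_j+\mu g_j^2$ with $|\lambda|,|\mu|\lesssim\omega_f(\delta)$ matches both coefficients and gives cubic decay. When the available capacity is $\ll\delta$ this trick breaks down, and handling that regime is the hard core of Vitushkin's theorem.

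\paragraph{The ``in particular'' clause.} It is meant under the theorem's hypothesis, where it is immediate: $\Int K=\emptyset$ gives $A(K)=C(K)$. Without the hypothesis it is false, since Roth's Swiss cheese has empty interior and $R(K)\ne C(K)$. So your discussion of zero area and partial localization is beside the point.

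As it stands, the proposal is not a proof. Either cite Gamelin for the whole statement, as the paper does, or give a genuine derivation from Vitushkin's theorem.
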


\begin{example}
\label{ex-non-dominating-hyponormal}
Choose numbers $0<\alpha_n<1$, $n\ge1$, such that
$\sum_{n\ge1}\alpha_n<\infty$. Starting with $E_0=[1/2,1]$, construct
$E_n$ inductively by deleting from the middle of each closed interval forming
$E_{n-1}$ an open interval of relative length $\alpha_n$, and put
\[
    E=\bigcap_{n\ge0}E_n .
\]
Then $E$ is compact and nowhere dense, $1\in E$, and
\[
    |E\cap I|>0
\]
whenever $I\subset\mathbb R$ is an open interval meeting $E$. Indeed, choose
$x_0\in I\cap E$. For each $N\ge0$, let $J_N$ be the closed interval forming
$E_N$ which contains $x_0$. Then
\[
    |E\cap J_N|=|J_N|\prod_{n>N}(1-\alpha_n)>0.
\]
Since $I$ is open and contains $x_0$, and since the lengths of the intervals
forming $E_N$ tend to zero as $N\to\infty$, we have $J_N\subset I$ for all
sufficiently large $N$. Hence, for such $N$,
\[
    |E\cap I|\ge |E\cap J_N|>0.
\]

Let $d_{\TT}(\xi,1)$ denote the angular distance on $\TT$ from $\xi$ to
$1$, that is,
\[
    d_{\TT}(\xi,1)=\inf\{|t-2\pi k|:\xi=e^{it},\ k\in\ZZ\} .
\]
Let $h:(0,1)\to(0,\pi/2)$ be a continuous function such that
$h(r)\to0$ as $r\uparrow1$.
 Put
\[
    G=\bigl\{z\in\mathbb D: |z|\notin E\bigr\}
\]
and
\[
    \mathcal N=\bigl\{r\xi:0<r<1,\ \xi\in\TT,\
    d_{\TT}(\xi,1)<h(r)\bigr\}.
\]
Set
\[
    \Omega=G\cup\mathcal N,
    \qquad K=\overline{\mathbb D}\setminus\Omega .
\]
The set $\Omega$ is open, since $E$ is closed and the functions
$z\mapsto |z|$, $z\mapsto d_{\TT}(z/|z|,1)$ on $\mathbb D\setminus\{0\}$,
and $h$ are continuous. It is dense in $\mathbb D$, because $E$ has empty
interior. It is also connected. Indeed, $\mathcal N$ is path connected: any
point of $\mathcal N$ can be joined to the positive radius by an arc of a
circle and then radially. The set $G$ is the union of the annuli corresponding
to the components of $(0,1)\setminus E$, together with the inner disc
corresponding to the component $(0,1/2)$. Each of these sets has nonempty
intersection with $\mathcal N$. Hence $\Omega$ is connected, by the elementary
fact that the union of a connected set with any family of connected sets each
having nonempty intersection with it is connected. Since $\Omega$ is dense in
$\mathbb D,$ it follows that $K$ has empty interior. Moreover,
\[
    \CC\setminus K=\Omega\cup(\CC\setminus\overline{\DD}),
\]
and these are the two complementary components.

We next prove that
\[
    R(K)=C(K).
\]
Since $\operatorname{Int}K=\emptyset$, we have $\partial K=K$. It remains to verify
the hypothesis of Theorem \ref{thm:rational-criterion}. If
$z=r\xi\in K\cap\mathbb D$, then $r\in E$. Since $E$ has empty interior,
there are radii $(r_j)_{j\ge1}\subset(0,1)\setminus E$ such that
$r_j\to r$ as $j\to\infty$. Hence $r_j\xi\in\Omega$ for all $j\ge1$, and
$r_j\xi\to z$. Thus $z\in\partial\Omega$. If $z\in\partial\mathbb D$, then
$z$ belongs to the boundary of the exterior component
$\mathbb C\setminus\overline{\mathbb D}$. Using Theorem
\ref{thm:rational-criterion}, we infer that $R(K)=A(K)$. Since
$\operatorname{Int}K=\emptyset$, this yields $R(K)=C(K)$.

We also have $\partial\mathbb D\subset K\subset\overline{\mathbb D}$, and
therefore
\[
    \conv K=\overline{\mathbb D},\qquad
    \partial\conv K=\partial\mathbb D\subset K.
\]
Let us show that $K$ has positive planar measure in every open disc meeting
it. Let $U$ be an open disc and
choose a point $\zeta\in U\cap K$.
Suppose first that $\zeta\in\mathbb D$. Write $\zeta=r_0\xi_0$, where
$\xi_0\in\TT$. Then $r_0\in E$ and $d_{\TT}(\xi_0,1)\ge h(r_0)$. There are
an interval $I\ni r_0$ and an arc $J\subset\TT$ containing $\xi_0$ such that
\[
    \{r\xi: r\in I,\ \xi\in J\}\subset U .
\]
Since $J$ is open and $d_{\TT}(\xi_0,1)\ge h(r_0)$, the continuity of
$d_{\TT}$ and, in the case of equality, a one-sided choice of the arc away
from $1$ allow us to choose a subarc $J_0\subset J$ of positive length such
that
\[
    \inf_{\xi\in J_0} d_{\TT}(\xi,1)>h(r_0).
\]
By the continuity of $h$, after shrinking $I$ we get
\[
    d_{\TT}(\xi,1)\ge h(r),\qquad r\in I,
    \quad \xi\in J_0 .
\]
Therefore
\[
    \{r\xi: r\in E\cap I,\ \xi\in J_0\}\subset K\cap U .
\]
Since $|E\cap I|>0$, $J_0$ has positive length, and $r\ge1/2$ on $E$, the polar-coordinate formula $dm_2=r\,dr\,d\theta$ shows that this set has positive planar measure.

It remains to treat the case where $\zeta\in\partial\mathbb D$. Since $U$ is
an open neighbourhood of $\zeta$, there are $\varepsilon>0$ and an arc
$J\subset\TT$ containing $\zeta$ such that
\[
    \{r\xi: 1-\varepsilon<r<1,
    \ \xi\in J\}\subset U .
\]
Choose a subarc $J_0\subset J$ of positive length such that
$1\notin\overline{J_0}$. By the continuity of $d_{\TT}$,
\[
    \gamma:=\inf_{\xi\in J_0}d_{\TT}(\xi,1)>0 .
\]
After decreasing $\varepsilon$ and using $h(r)\to0$ as $r\uparrow1$, we may
assume that
\[
    h(r)<\gamma,\qquad 1-\varepsilon<r<1 .
\]
Note that $(1-\varepsilon,1+\varepsilon)$ meets $E$. Since, as proved
above, every open interval meeting $E$ has positive intersection with $E$, and
since $E\subset[1/2,1]$, we infer
\[
    |E\cap(1-\varepsilon,1)|>0 .
\]
Thus
\[
    \{r\xi: r\in E\cap(1-\varepsilon,1),\ \xi\in J_0\}
\]
is contained in $K\cap U$ and has positive planar measure, again by the polar-coordinate formula 
and the fact that $r\ge1/2$ on $E$.

Then Theorem \ref{p-putnam-realization} gives a completely non-normal
hyponormal operator $T$ such that $\sigma(T)=K$. Since
$\partial\conv\sigma(T)\subset\sigma(T)$, Corollary \ref{c1} applies. On the
other hand, $R(\sigma(T))=C(\sigma(T))$, so the criterion in Theorem
\ref{thm:brown} does not apply.
Positive-area Swiss-cheese examples with $R(K)=C(K)$ are constructed in
\cite[p.~133]{BR} from the approximation principle \cite[Theorem]{BR}.
\end{example}

The disk example above was chosen for simplicity of presentation, since there $S(T)=\emptyset$. Examples with $S(T)\ne\emptyset$ are obtained
by the same construction with the closed disc replaced by a compact convex
polygon $P$ with $0\in\operatorname{int}P$. 

Since \(0\in\operatorname{int}P\), apart from the origin, every point of \(P\) has a unique representation
as \(r\xi\), where \(0< r\le1\) and \(\xi\in\partial P\). Repeating the
construction with this radial parametrization yields a 
compact set $K\subset P$ with
\[
    \partial P\subset K,
    \qquad \operatorname{Int}K=\emptyset,
    \qquad R(K)=C(K),
\]
which has positive planar measure in every open disc meeting it. The equality
$R(K)=C(K)$ follows from the same boundary-component check and Theorem~\ref{thm:rational-criterion}.

For the corresponding realized
hyponormal operator $T$ one has $\overline{W(T)}=P$ and hence
$S(T)=\partial P\subset\sigma(T)$.

The analogous assertion holds for cohyponormal operators, that is, for operators $T$ satisfying $T^*T\le TT^*$. Indeed, then $T^*$ is hyponormal. Moreover, $\sigma(T^*)=\{\overline z:z\in\sigma(T)\}$ and $W(T^*)=\{\overline z:z\in W(T)\}$, so the assumptions $S(T)\subset\sigma(T)$ and $S(T)=\emptyset$ are transferred to $T^*$ by complex conjugation. Applying Corollary \ref{c1} to $T^*$ gives a nontrivial invariant subspace $N$ for $T^*$. Hence $N^\perp$ is a nontrivial invariant subspace for $T$.

\section{Toeplitz operators}
Let $H^2:=H^2(\DD)$ be the Hardy space on the unit disc, identified with its boundary values in $L^2(\TT)$. Let $P:L^2(\TT)\to H^2$ be the orthogonal projection. For a symbol $\varphi\in L^\infty(\TT)$ the Toeplitz operator $T_\varphi$ on $H^2$ is defined by
\begin{equation}\label{defto}
T_\varphi f=P(\varphi f)\qquad(f\in H^2).
\end{equation}
The class of Toeplitz operators is basic in operator theory and one of the most well-studied. For an introduction to this subject, see, e.g., \cite{NikToeplitz}. 
Recall that
\begin{equation}\label{convexoid-toeplitz}
\overline{W(T_\varphi)}=\conv\sigma(T_\varphi),
\end{equation}
see \loccite{Corollary~3}{K}. Thus we have the following corollary.

\begin{corollary}\label{c2}
Let $T_\varphi$ be a Toeplitz operator with $\varphi\in L^\infty(\TT)$. If
$$
S(T_\varphi)\subset\sigma(T_\varphi),
$$
then $T_\varphi$ has a nontrivial invariant subspace. In particular, this holds whenever $S(T_\varphi)=\emptyset$.
Equivalently, the same conclusion holds under the condition
$$
\partial\conv\sigma(T_\varphi)\subset\sigma(T_\varphi).
$$
\end{corollary}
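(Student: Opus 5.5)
The plan is to mirror the proof of Corollary \ref{c1} word for word, with the hyponormal convexoid identity \eqref{convexoid-hyp} replaced by the Toeplitz identity \eqref{convexoid-toeplitz}. The only property of hyponormal operators used in Corollary \ref{c1} was $\overline{W(T)}=\conv\sigma(T)$. So it suffices to show two things: that under \eqref{convexoid-toeplitz} the hypothesis of Theorem \ref{t1} is equivalent to $\partial\conv\sigma(T_\varphi)\subset\sigma(T_\varphi)$, and that $S(T_\varphi)\subset\sigma(T_\varphi)$ implies this inclusion.

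First, put $K=\sigma(T_\varphi)$, which is compact and nonempty. Then $C:=\conv K=\overline{W(T_\varphi)}$ is compact and convex. Split into two cases exactly as after \eqref{convexoid-hyp}.
\begin{itemize}
\item If $\Int W(T_\varphi)\neq\emptyset$, then $\partial W(T_\varphi)=\partial C$.
\item If $\Int W(T_\varphi)=\emptyset$, then $W(T_\varphi)$ lies in a line. Since $W(T_\varphi)$ is convex, its closure in $\CC$ is all of its boundary, so $\partial W(T_\varphi)=\overline{W(T_\varphi)}=C=\partial C$.
\end{itemize}
In both cases $\partial W(T_\varphi)=\partial\conv\sigma(T_\varphi)$. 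Hence the condition $\partial\conv\sigma(T_\varphi)\subset\sigma(T_\varphi)$ is exactly the hypothesis of Theorem \ref{t1}, and that theorem gives a nontrivial invariant subspace. This settles the ``equivalently'' formulation.

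Next, assume $S(T_\varphi)\subset K$ and take $\lambda\in\partial C$. There are two possibilities.
\begin{itemize}
\item If $\lambda$ is an extreme point of $C=\conv K$, then $\lambda\in K$, because the extreme points of the convex hull of a compact planar set lie in the set (Carathéodory / finite-dimensional Krein--Milman).
\item Otherwise $\lambda$ lies in the relative interior of a nontrivial segment contained in $C$. Take a supporting line $L$ of $C$ at $\lambda$. The face $L\cap C$ is a convex set containing that segment, since a segment through $\lambda$ inside $C$ must lie in $L$. So $L\cap C$ is a nontrivial segment in $\partial C=\partial W(T_\varphi)$ containing $\lambda$, and therefore $\lambda\in S(T_\varphi)\subset K$.
\end{itemize}
Thus $\partial\conv K\subset K$. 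The case $S(T_\varphi)=\emptyset$ is a special case of this. The converse implication is trivial, since $S(T_\varphi)\subset\partial W(T_\varphi)=\partial\conv K$.

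The main obstacle is only bookkeeping in the degenerate case $\Int W(T_\varphi)=\emptyset$, where $S(T_\varphi)$ is the whole segment $\overline{W(T_\varphi)}$ (or is empty when $W(T_\varphi)$ is a point). This case is handled consistently by the argument above. It can also be dispatched directly, as in Theorem \ref{t1}: an affine image of $T_\varphi$ is then selfadjoint. No Toeplitz-specific input beyond \eqref{convexoid-toeplitz} is needed.
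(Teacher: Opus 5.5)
Your proposal is correct and follows the paper's route exactly. The paper's proof of Corollary~\ref{c2} is the proof of Corollary~\ref{c1} with \eqref{convexoid-toeplitz} in place of \eqref{convexoid-hyp}: the convexoid identity turns $\partial\conv\sigma(T_\varphi)\subset\sigma(T_\varphi)$ into the hypothesis of Theorem~\ref{t1}, and the extreme-point/supporting-line dichotomy shows that $S(T_\varphi)\subset\sigma(T_\varphi)$ forces that inclusion (in the degenerate case, $\partial W=\overline{W}$ follows from $\Int W=\emptyset$ rather than from convexity, but this does not affect the argument).
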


\begin{proof}
The proof is the same as that of Corollary \ref{c1}, using \eqref{convexoid-toeplitz} in place of \eqref{convexoid-hyp}.
\end{proof}

If $\varphi$ is a continuous function on $\TT$, then the spectrum of $T_\varphi$ is given by the standard Toeplitz spectrum formula \loccite{Corollary~3.1.7}{NikToeplitz}:
\begin{equation} \label{spformula}
\sigma(T_\varphi)=\mathcal R(\varphi)\cup\{\lambda\in\CC\setminus\mathcal R(\varphi): w(\varphi,\lambda)\ne 0\},
\end{equation}
where $\mathcal R(\varphi)$ denotes the range of $\varphi$ and $w(\varphi,\lambda)$ is the winding number of the curve determined by $\varphi$ with respect to $\lambda$.

Next, we apply our results to the study of invariant subspaces of Toeplitz operators.
Let us recall the type of regularity entering Peller's invariant-subspace criteria for Toeplitz operators. The continuous-symbol results below are due to Peller: the Lipschitz-arc version is from \cite{PellerToeplitz83}, and the $C^2$-arc refinement from \cite{PellerToeplitz87}; see also \cite{PellerToeplitz} for the discussion and for piecewise continuous symbols. To this end, for $\varphi \in C(\mathbb T)$ denote by  $\omega_\varphi(\delta):=$
$$
 \sup\{|\varphi(e^{is})-\varphi(e^{it})|:|s-t|\le\delta\}, \qquad \delta \ge 0,
$$
 its modulus of continuity.
\begin{theorem}
Let $T_\varphi \in \mathcal B(H^2(\mathbb D))$ be given by \eqref{defto} with 
$\varphi \in C(\mathbb T)$, $\varphi\ne {\rm const}$. Suppose that there are a Jordan arc $J$ and an open disc $D\subset\CC$ such that
$$
        \varphi(\mathbb T)\cap J\cap D\ne\emptyset,
        \qquad
        \varphi(\mathbb T)\cap(D\setminus J)=\emptyset.
$$
If $J$ is Lipschitz and, for some $\delta_0>0$,
$$
        \int_0^{\delta_0}
        \frac{\omega_\varphi(\delta)}
             {\delta\log(1/\delta)}\,d\delta<\infty,
$$
then $T_\varphi$ has a nontrivial invariant subspace. If $J$ is of class $C^2$, the same conclusion holds under the weaker square Dini-log condition
\begin{equation}\label{dini_log}
        \int_0^{\delta_0}
        \frac{(\omega_\varphi(\delta))^2}
             {\delta\log(1/\delta)}\,d\delta<\infty.
\end{equation}
\end{theorem}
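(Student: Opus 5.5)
This statement is Peller's theorem, quoted from \cite{PellerToeplitz83,PellerToeplitz87}, and the paper uses it only as a comparison point; it is not a consequence of Theorem \ref{t1}. A proof would follow Peller's strategy: produce a nontrivial hyperinvariant subspace for $T_\varphi$ from a local functional calculus near a point of the curve $\varphi(\mathbb T)$ where the curve is locally contained in a regular arc $J$.

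\textbf{Step 1: localization.} Fix a point $\lambda_0\in\varphi(\mathbb T)\cap J\cap D$. Shrink $D$ so that $D\setminus J$ has exactly two components $D_\pm$. Near $\lambda_0$, the spectrum $\sigma(T_\varphi)$, given by \eqref{spformula}, then lies in $D\cap J$ together with possibly $D_+$ or $D_-$, according to the winding number of $\varphi$ on each side. If one side, say $D_-$, is in the resolvent set, the resolvent $(T_\varphi-\lambda)^{-1}$ is controlled from that side. On the other side we would use Toeplitz theory: factorize $\varphi-\lambda=h_\lambda\,u_\lambda\,\overline{g_\lambda}$ (Widom--Devinatz type) and get explicit one-sided inverses modulo finite rank.

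\textbf{Step 2: resolvent growth.} Estimate the resolvent, or a suitable substitute built from the factorization, as $\lambda$ approaches $J$ transversally. We aim for a bound
\[
\|(T_\varphi-\lambda)^{-1}\|\le \exp\bigl(c\,\Psi(\operatorname{dist}(\lambda,J))\bigr),
\]
where $\Psi$ is governed by $\omega_\varphi$. The outer factor is $\exp$ of a Cauchy integral of $\log|\varphi-\lambda|$, so its size is governed by the harmonic conjugate of a function whose modulus of continuity is $\omega_\varphi$. The Dini-log integrability hypothesis is designed precisely so that the resulting growth satisfies the Levinson--Sjöberg condition $\int\log\log M(t)\,dt<\infty$ along the normal direction to $J$.

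\textbf{Step 3: Levinson--Sjöberg and decomposition.} The Levinson--Sjöberg (log-log) theorem, valid for Lipschitz arcs, converts the growth bound into a uniform bound for analytic functions near $J$. This yields a Dynkin-type functional calculus on smooth functions supported near $\lambda_0$. Taking a bump function $f$ supported in $D$ with $f(\lambda_0)\neq0$, the operator $f(T_\varphi)$ is nonzero and not injective on all of $H^2$. Hence $\overline{\operatorname{ran} f(T_\varphi)}$ is a nontrivial invariant subspace, since $\varphi\neq\mathrm{const}$ forces the spectrum to extend beyond $D$. For $C^2$ arcs, a sharper two-sided argument of Domar type works with the square of $\omega_\varphi$ in the growth, which gives condition \eqref{dini_log}.

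The \textbf{main obstacle} is Step 2: turning the modulus of continuity of $\varphi$ into a sharp transversal resolvent bound uniform along $J$. This requires delicate estimates on conjugate functions of $\log|\varphi-\lambda|$ where $\varphi-\lambda$ nearly vanishes. In this paper I would not reprove it and would instead cite Peller.
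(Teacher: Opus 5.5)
Your proposal takes the same route as the paper: the statement is Peller's theorem, which the paper quotes from \cite{PellerToeplitz83,PellerToeplitz87} without proof and only as a point of comparison, so citing it is all that is needed. Your heuristic sketch is not load-bearing, and its Step~1 simplifies: if $w(\varphi,\lambda)\ne0$ for some $\lambda\notin\varphi(\mathbb T)$, then $T_\varphi-\lambda$ is Fredholm of nonzero index, so its kernel or its closed range is already a nontrivial invariant subspace, and the only substantive case is $\sigma(T_\varphi)=\varphi(\mathbb T)$, with the resolvent defined on both sides of $J$.
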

These results are, to the best of our knowledge, the strongest general existence criteria of this type in terms of symbol regularity. The criterion below is of a different nature: it uses only the spectral geometry, which for continuous symbols is determined by the range and the winding number in the formula above.

Thus there are many functions $\varphi$ such that the Toeplitz operator $T_\varphi$ satisfies the condition of Corollary \ref{c2}. The following class is particularly transparent.

\begin{example}\label{ex-convex-jordan-toeplitz}
Let $\Gamma$ be a Jordan curve which bounds a convex domain $\Omega$, and let $\varphi:\TT\to\Gamma$ be a continuous parametrization with nonzero winding number around the points of $\Omega$ and winding number zero on $\CC\setminus\overline\Omega$. Then the preceding spectrum formula \eqref{spformula} gives
\begin{equation}\label{spectrum}
    \sigma(T_\varphi)=\overline\Omega.
\end{equation}
Indeed, the range term gives $\Gamma$, and the winding-number term fills precisely the interior $\Omega$. Since $\overline\Omega$ is convex, we have
$$
\partial\conv\sigma(T_\varphi)=\partial\sigma(T_\varphi)=\Gamma\subset\sigma(T_\varphi).
$$
Thus Corollary \ref{c2} applies. In this way every convex Jordan curve, with any continuous nonzero-index parametrization, gives a Toeplitz operator whose spectrum contains $\partial\operatorname{conv}\sigma(T_\varphi)$, and hence has a nontrivial invariant subspace by Corollary \ref{c2}.

A concrete subfamily is obtained by taking ellipses parametrised by
$$
    \varphi_a(e^{it})=e^{it}+a e^{-it}=(1+a)\cos t+i(1-a)\sin t,\qquad t \in [0,2\pi)
$$
for $0<a<1.$ Then
$$
    T_{\varphi_a}=T_z+aT_{\overline z}=S+aS^*,
$$
where $S=T_z$ is the unilateral shift on $H^2$, 
and, in view of \eqref{spectrum},
$$
\partial \sigma(T_{\varphi_a})= \varphi_a(\mathbb T).
$$

This recovers the explicit example $S+aS^*$ as a special case of the convex-curve construction.

The same ellipse gives rough continuous symbols to which Corollary \ref{c2} applies, although the Dini-log conditions above fail.
Choose $t_0>0$ and $c>0$ so small that
$$
    c\bigl(\log\log(e^e/t_0)\bigr)^{-1/2}<\pi/4.
$$
Let $u:[0,2\pi]\to[0,2\pi]$ be an increasing homeomorphism such that
$u(0)=0$, $u(2\pi)=2\pi$, and
$$
    u(t)=c\bigl(\log\log(e^e/t)\bigr)^{-1/2},
    \qquad 0<t\le t_0 .
$$
Define \(\psi:\mathbb T\to \varphi_a(\mathbb T)\) by
$$
    \psi(e^{it}):=\varphi_a(e^{iu(t)}), \qquad t \in [0,2\pi].
$$

Then \(\psi\) is continuous, has range \(\varphi_a(\mathbb T)\), and winds once around the interior of the ellipse \(\varphi_a(\mathbb T)\).
Hence by \eqref{spectrum},
\[
        \partial\sigma(T_\psi)=\partial\sigma(T_{\varphi_a})
        =\varphi_a(\mathbb T),
\]
and Corollary \ref{c2} again applies to \(T_\psi\).

It remains to check that this rough parametrization is not covered by the relevant
modulus-of-continuity hypotheses.
Indeed, the parametrized ellipse satisfies
$$
    \frac d{ds}\varphi_a(e^{is})\bigg|_{s=0}=i(1-a)\ne0,
$$
so, for some $c_1>0$ and all sufficiently small $s>0$,
$$
    |\varphi_a(e^{is})-\varphi_a(1)|\ge c_1s.
$$
Therefore, for all sufficiently small $\delta>0$,
$$
    \omega_\psi(\delta)
    \ge |\psi(e^{i\delta})-\psi(1)|
    = |\varphi_a(e^{iu(\delta)})-\varphi_a(1)|
    \ge C\bigl(\log\log(e^e/\delta)\bigr)^{-1/2},
$$
and then
$$
 \int_0^{\delta_0}
 \frac{(\omega_\psi(\delta))^2}
   {\delta\log(1/\delta)}\,d\delta=\infty
$$
for all $\delta_0\in (0,1).$
Thus the square Dini-log condition \eqref{dini_log} fails. Since $\omega_\psi(\delta)\to0$ as $\delta\downarrow0$, this also implies the failure of the stronger Dini-log condition without the square. Consequently this example is not covered by either of the Peller criteria recalled above.
\end{example}

\section{A remark on bilateral weighted shifts}
\label{sec:annular-bilateral}

Let $B_w$ be a bilateral weighted shift on $\ell^2(\mathbb Z)$ with respect
to the canonical basis $(e_n)_{n\in\mathbb Z}$,
\[
    B_we_n=w_ne_{n+1},\qquad n\in\mathbb Z,
\]
with the weight $(w_n)_{n \in \mathbb Z}\subset \mathbb C.$
We assume throughout this section that $B_w$ is invertible, equivalently
\[
    0<\inf_{n\in\mathbb Z}|w_n|\le \sup_{n\in\mathbb Z}|w_n|<\infty .
\]
After a diagonal unitary conjugation, see \cite[Corollary~1]{Sh}, we may
suppose that $w_n>0$ for all $n\in\mathbb Z$ whenever formulas involving the
weights are used.

The coordinate tails already give nontrivial invariant subspaces for $B_w$.
For invertible bilateral shifts the more relevant question is two-sided
invariance, or hyperinvariance. Recall that a subspace is hyperinvariant for
an operator $T$ if it is invariant under every bounded operator commuting with
$T$; for an invertible $B_w$, such a subspace is automatically invariant under
all powers $B_w^n$, $n\in\mathbb Z$.

The argument below is independent of Theorem \ref{t1}: the relevant spectral
sets are annuli rather than convex sets. We shall use the following annular
spectral-set theorem of Crouzeix and Greenbaum, see
\loccite{Theorems~10 and~11}{CG}. For an operator $T\in\BH$ let
\[
    \omega(T)=\sup\{|\langle Tx,x\rangle|:\|x\|=1\}
\]
denote its numerical radius.

\begin{theorem}\label{thm:annular-CG}
Let
\[
    \mathcal A_s=\{z\in\mathbb C:s^{-1}\le |z|\le s\},\qquad s>1.
\]
If $T\in \BH$ is invertible and satisfies
\[
    \omega(T)\le s,
    \qquad
    \omega(T^{-1})\le s,
\]
then there exists a constant $K(s)$ such that $\mathcal A_s$ is a
$K(s)$-spectral set for $T$.
\end{theorem}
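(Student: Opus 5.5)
The plan is to adapt the Crouzeix–Palencia double-layer-potential argument (the proof of Theorem \ref{crouzeix}) from a convex domain to the annulus $\mathcal A_s$. Its boundary consists of two circles, $\Gamma_1=\{|z|=s\}$ and $\Gamma_2=\{|z|=s^{-1}\}$. I would not try to reduce to Theorem \ref{crouzeix} by a Laurent splitting $f=f_++f_-$: the Riesz projection is unbounded in the sup norm, so $\|f_\pm\|$ cannot be controlled by $\|f\|_{\mathcal A_s}$ up to the boundary.

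\textbf{Regularization.} First replace $s$ by a slightly larger $s'>s$ and $T$ by $rT$ with $r$ close to $1$. This puts $\sigma(T)$ in the interior of $\mathcal A_{s'}$, so all Cauchy integrals over $\partial\mathcal A_{s'}$ converge in norm. At the end, let $s'\downarrow s$; the constants $K(s')$ stay bounded as long as $s'$ stays in a compact subset of $(1,\infty)$.

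\textbf{Cauchy representations.} For a rational $f$ with poles off $\mathcal A_s$, write
\[
f(T)=\frac1{2\pi i}\int_{\partial\mathcal A_s} f(\sigma)(\sigma-T)^{-1}\,d\sigma ,
\]
and introduce the Cauchy transform of $\bar f$,
\[
g(z)=\frac1{2\pi i}\int_{\partial\mathcal A_s}\overline{f(\sigma)}\,\frac{d\sigma}{\sigma-z}, \qquad z\in\Int\mathcal A_s .
\]
Then $f(T)+g(T)^*=\int_{\partial\mathcal A_s} f(\sigma)\,\mu(\sigma,T)$, where $\mu(\sigma,T)$ is the operator-valued double-layer kernel
\[
\mu(\sigma,T)=\frac{1}{2\pi i}\Bigl[(\sigma-T)^{-1}d\sigma-(\bar\sigma-T^*)^{-1}d\bar\sigma\Bigr].
\]

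\textbf{Positivity of the kernel.} This is the key point. On each circle, $\mu(\sigma,T)\ge0$ is equivalent to a numerical-range condition. On $\Gamma_1$, with $\sigma=se^{i\theta}$, it amounts to $\operatorname{Re}\langle (s-e^{-i\theta}T)x,x\rangle\ge0$, which follows from $\omega(T)\le s$. On $\Gamma_2$, which carries the opposite orientation, it amounts to $\operatorname{Re}\langle (1-s^{-1}e^{i\theta}T^{-1})y,y\rangle\ge0$ after the substitution $x=T^{-1}y$ in the identity $(\sigma-T)^{-1}=-\sigma^{-1}T^{-1}(\sigma^{-1}-T^{-1})^{-1}$; this follows from $\omega(T^{-1})\le s$. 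Given positivity, and since the total mass is $\int_{\partial\mathcal A_s}\mu=2I$ (from $\frac1{2\pi i}\int_{\partial\mathcal A_s}(\sigma-T)^{-1}d\sigma=I$), the standard estimate for positive operator measures gives
\[
\|f(T)+g(T)^*\|\le 2\|f\|_{\mathcal A_s}.
\]

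\textbf{Main obstacle: bounding $\|g(T)\|$.} For a convex domain, $g$ is bounded by $\|f\|$ because the scalar double-layer kernel is positive, so $\|g\|\le\|f\|$ in the sup norm. For the annulus this fails: the scalar kernel $\frac1\pi\,d\arg(\sigma-z)$ is positive on the outer circle but has the wrong sign on the inner circle for $z$ outside it. I would split $g=g_1+g_2$ according to the two circles.

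The outer piece $g_1$ is the Cauchy integral of $\bar f$ over $\Gamma_1$ and is analytic in the disc $\{|z|<s\}$. On $\Gamma_1$ its boundary values are half of $\bar f$ plus the double-layer term, hence bounded by $C\|f\|$. Theorem \ref{crouzeix} applied to the disc (valid because $\omega(T)\le s$) then bounds $\|g_1(T)\|$.

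The inner piece $g_2$ is analytic in $\{|z|>s^{-1}\}$ and vanishes at $\infty$. Writing it as a function of $z^{-1}$ and applying Theorem \ref{crouzeix} to $T^{-1}$ bounds $\|g_2(T)\|$ in the same way. The boundary values of $g_2$ on $\Gamma_1$, and of $g_1$ on $\Gamma_2$, are given by smooth kernels, since the two circles are a positive distance $s-s^{-1}$ apart. They are therefore bounded by $C(s)\|f\|_{\mathcal A_s}$, with $C(s)$ blowing up as $s\downarrow1$.

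Combining the three estimates gives $\|f(T)\|\le K(s)\|f\|_{\mathcal A_s}$ with $K(s)=2+C'(s)$. The delicate part is the bookkeeping of these cross-boundary terms so that each is controlled by the maximum principle on the appropriate disc or exterior disc.
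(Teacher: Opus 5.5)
The positivity step you call the key point is false, and not for bookkeeping reasons. Write $d\sigma=i\nu\,|d\sigma|$ with $\nu$ the outward unit normal. Then $\mu(\sigma,T)=\frac1\pi\operatorname{Re}\bigl(\nu(\sigma-T)^{-1}\bigr)|d\sigma|$, and the substitution $y=(\sigma-T)x$ shows that $\mu(\sigma,T)\ge0$ if and only if $\operatorname{Re}\bigl(\bar\nu(\sigma-w)\bigr)\ge0$ for all $w\in W(T)$.

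On $\Gamma_2$ one has $\nu=-s\sigma$, so positivity means $W(T)\subset\{w:\operatorname{Re}(\bar\sigma w)\ge s^{-2}\}$. The half-planes attached to $\sigma$ and $-\sigma$ are disjoint, so $\mu(\cdot,T)$ is not a positive measure on $\Gamma_2$ for any operator $T$. Concretely, $T=I$ satisfies the hypotheses, and $\mu(\sigma,I)=\frac1\pi\,d\arg(\sigma-1)\,I$ changes sign on $\Gamma_2$. This is the scalar sign defect you point out yourself later, so the two halves of your proposal contradict each other.

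Your substitution $x=T^{-1}y$ in fact verifies positivity of $\mu(\sigma^{-1},T^{-1})$, not of $\mu(\sigma,T)$. The correct relation is
\[
(\sigma-T)^{-1}d\sigma=(\sigma^{-1}-T^{-1})^{-1}d(\sigma^{-1})+\frac{d\sigma}{\sigma},
\qquad\text{i.e.}\qquad
\mu(\sigma,T)=\mu(\sigma^{-1},T^{-1})+\frac1\pi\,d\arg\sigma\ \text{ on }\Gamma_2 .
\]
You dropped the last term, which is a scalar measure of total mass $-2$ on the clockwise circle $\Gamma_2$. So $\|f(T)+g(T)^*\|\le2\|f\|_{\mathcal A_s}$ is unproved.

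The repair is easy. Since $\omega(T^{-1})\le s$, we have $\mu(\sigma^{-1},T^{-1})\ge0$. Hence $\mu(\cdot,T)$ is a positive operator measure of mass $4I$ minus a scalar measure of mass $2$, which gives $\|f(T)+g(T)^*\|\le6\|f\|_{\mathcal A_s}$. Combined with your bounds on $g_1(T)$ and $g_2(T)$, this does produce some $K(s)$.

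Second, your reason for discarding the Laurent splitting is wrong, and that splitting gives a short proof of the statement as formulated. The paper gives no proof here; it quotes Crouzeix--Greenbaum. The Riesz projection is unbounded on a single circle, but the two boundary circles are $s-s^{-1}$ apart. Set $f_-(z)=-\frac1{2\pi i}\int_{|\sigma|=s^{-1}}\frac{f(\sigma)}{\sigma-z}\,d\sigma$ and $f_+=f-f_-$. Then $|f_-|\le\|f\|_{\mathcal A_s}/(s^2-1)$ on $|z|=s$, so the maximum principle gives $\|f_+\|_{\{|z|\le s\}}\le\frac{s^2}{s^2-1}\|f\|_{\mathcal A_s}$. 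Similarly $\|f_-\|_{\{|z|\ge s^{-1}\}}\le\frac{2s^2-1}{s^2-1}\|f\|_{\mathcal A_s}$. Both numerical ranges of $T$ and $T^{-1}$ lie in $\{|z|\le s\}$, so Theorem \ref{crouzeix} applied to each yields $\|f(T)\|\le(1+\sqrt2)\frac{3s^2-1}{s^2-1}\|f\|_{\mathcal A_s}$.

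This is the same cross-boundary estimate you invoke for $g_1,g_2$. Indeed, with $a_0$ the constant Laurent coefficient of $f$, one has $g_1(z)=\overline{a_0}+\overline{f_-(s^2/\bar z)}$ and $g_2(z)=\overline{f_+(s^{-2}/\bar z)}-\overline{a_0}$. So your splitting of $g$ is the Laurent splitting in disguise, and the double-layer machinery buys nothing once $K(s)$ is allowed to blow up as $s\downarrow1$.

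Relatedly, the boundary values of the single-circle Cauchy integral $g_1$ on $\Gamma_1$ are not ``half of $\bar f$ plus the double-layer term''. They also contain a conjugate-function term, which is bounded here only because $f$ is analytic on the whole annulus.
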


In addition, we shall use the spectral-set result due to 
Chevreau--Pearcy--Shields \cite[Theorem 6.2]{CPS}, formulated below.
\begin{theorem}\label{thm:CPS-shift}
Let $T$ be an invertible bilateral weighted shift on $\ell_2(\mathbb Z).$ If $\sigma(T)$ is a
spectral set for $T$, then $T$ has a nontrivial hyperinvariant subspace.
\end{theorem}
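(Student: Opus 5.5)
The plan is to combine three ingredients: the rotational symmetry of bilateral weighted shifts, the shape of their spectra, and the norm bounds that the spectral-set hypothesis gives on the powers $T^{n}$, $n\in\ZZ$. As in the discussion above, I may assume $w_n>0$. Rotational symmetry ($e^{i\theta}T$ is unitarily equivalent to $T$) together with the classical description of spectra of invertible bilateral weighted shifts gives
\[
\sigma(T)=\{z: r_1\le |z|\le r_2\}
\]
for some $0<r_1\le r_2$. Since $\sigma(T)$ is a $K$-spectral set and $z^{n}$, $n\in\ZZ$, are rational with poles off $\sigma(T)$, we get $\|T^n\|\le K r_2^{\,n}$ and $\|T^{-n}\|\le K r_1^{-n}$ for $n\ge0$. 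In terms of weights, these are uniform bounds on the products $w_k w_{k+1}\cdots w_{k+n-1}$ from above by $Kr_2^n$ and from below by $K^{-1}r_1^n$.

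\emph{Case $r_1=r_2=r$.} Here $T/r$ and $(T/r)^{-1}$ are power bounded. By Sz.-Nagy's theorem, $T/r$ is similar to a unitary $U$ with $\sigma(U)=\TT$. Since $U$ is not scalar, a spectral subspace $E_U(\Delta)H$ for an arc $\Delta$ with $\Delta\neq\TT$ and $m(\Delta)>0$ is nontrivial. By Fuglede's theorem it is hyperinvariant for $U$. Hyperinvariant subspaces transfer under similarity, because the commutants correspond, so $T$ has a nontrivial hyperinvariant subspace.

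\emph{Case $r_1<r_2$.} I would try to show that the spectral-set bounds force point spectrum in $\{r_1<|z|<r_2\}$ for $T$ or $T^*$. Formally, an eigenvector of $T^*$ for $\lambda$ has coordinates $x_n=\bar\lambda^{-n}\prod w$ over the appropriate index range (with the obvious convention for negative $n$), so $\ell^2$-summability is governed by the growth rates of the weight products along the two tails. Using the two-sided products and the description of $r_1,r_2$ as limits of $\sup_k$ and $\inf_k$ of $(w_k\cdots w_{k+n-1})^{1/n}$, I would show the following. If neither $T$ nor $T^*$ had eigenvalues in the open annulus, then the tail growth rates at $+\infty$ and $-\infty$ would be ``interlaced'' so that both lie in $[r_1,r_2]$ with the wrong ordering. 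One would then exhibit a rational function, for instance a suitable combination of $z^{n}$ and $z^{-m}$ peaking in modulus near $|z|=r_1$ and $|z|=r_2$, whose value at $T$ violates $\|f(T)\|\le K\|f\|_{\sigma(T)}$. Once an eigenvalue $\lambda$ exists, the conclusion follows. If $\lambda$ is an eigenvalue of $T$, then $\ker(T-\lambda)$ is hyperinvariant, and it is nontrivial because $T$ is not scalar. If $\lambda$ is an eigenvalue of $T^*$, then $\overline{\operatorname{ran}(T-\bar\lambda)}=\ker(T^*-\lambda)^\perp$ is hyperinvariant and nontrivial.

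The main obstacle is the last step of the annulus case: the contradiction when point spectrum is absent. My first attempt would be to test the inequality on Laurent polynomials $f(z)=\sum_{|j|\le N} c_j z^j$ applied to the basis vectors $e_k$ with $|k|$ large. Here $f(T)e_k=\sum_j c_j(\text{weight product})e_{k+j}$, and choosing $c_j$ adapted to the two tails should make $\|f(T)e_k\|/\|f\|_{\sigma(T)}$ unbounded. If this direct estimate fails, the fallback is Brown-type machinery. One would use the spectral-set hypothesis to get a contractive, up to $K$, $R(\sigma(T))$-functional calculus. Then dominancy of the annulus in the spirit of Theorem~\ref{thm:brown} would produce a nontrivial invariant subspace. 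It would then remain to upgrade it to a hyperinvariant one using the fact that the commutant of an invertible bilateral weighted shift consists of ``weighted Laurent'' operators, that is, limits of functions of $T$.
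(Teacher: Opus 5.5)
The paper does not prove this statement; it quotes it from Chevreau--Pearcy--Shields. Your argument therefore has to stand on its own.

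\emph{The case $r_1=r_2$.} This part is fine (Sz.-Nagy plus Fuglede). The only correction is that nontriviality of $E_U(\Delta)H$ has nothing to do with Lebesgue measure. Take $\Delta$ to be an open arc whose complement also has nonempty interior. Since $\sigma(U)=\TT$, both $E_U(\Delta)$ and $I-E_U(\Delta)$ are then nonzero.

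\emph{The case $r_1<r_2$: the main line fails.} The paper's convention, which you also use, is that ``spectral'' means $K$-spectral for some $K$. Under this convention, the claim that the hypothesis forces eigenvalues of $T$ or $T^*$ in $\{r_1<|z|<r_2\}$ is false. So no Laurent polynomial can produce the contradiction you want. Here is a counterexample.
\begin{itemize}
\item Fix $s>1$ and put $w_n=s$ for $n<0$.
\item For $n\ge0$, let the weights be constant on consecutive blocks of lengths $L_1,L_2,\dots$: equal to $s$ on odd-numbered blocks and to $s^{-1}$ on even-numbered blocks, with $L_j\ge j(L_1+\dots+L_{j-1})$.
\item Arbitrarily long runs of $s$ and of $s^{-1}$ give $\|T^n\|=\|T^{-n}\|=s^n$ for $n\ge0$. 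Hence $\|T\|=r(T)=\|T^{-1}\|=r(T^{-1})=s$ and $\sigma(T)=\mathcal A_s$.
\item Since $\omega(T^{\pm1})\le\|T^{\pm1}\|=s$, Theorem~\ref{thm:annular-CG} shows that $\sigma(T)$ is a $K(s)$-spectral set.
\item Put $\beta(n)=w_0w_1\cdots w_{n-1}$. The block condition gives $\limsup_{n\to\infty}\beta(n)^{1/n}=s$ and $\liminf_{n\to\infty}\beta(n)^{1/n}=s^{-1}$. So for $s^{-1}<|\lambda|<s$ the terms of both series
\[
\sum_{n\ge0}\beta(n)^2|\lambda|^{-2n},\qquad \sum_{n\ge0}|\lambda|^{2n}\beta(n)^{-2}
\]
are unbounded.
\item These series are exactly the positive-tail parts of the $\ell^2$ conditions for an eigenvector of $T$ at $\lambda$ and of $T^*$ at $\bar\lambda$. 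Hence neither operator has eigenvalues in the open annulus.
\end{itemize}
The obstruction to eigenvalues comes from oscillation inside a single tail, not from an interlacing of the two tails. It is fully compatible with the spectral-set estimate. This is exactly the case in which the theorem has content, and your main line does not reach it.

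\emph{Your fallback.} It points in a workable direction but is not yet an argument. Theorem~\ref{thm:brown} is about hyponormal operators and does not apply here; the shift above is not hyponormal, since its weights are not monotone. What you would need is a Scott Brown-type theorem for an annulus that is a $K$-spectral set with dominating spectrum. That requires two steps:
\begin{itemize}
\item show that the rational calculus of $T$ extends to a weak$^*$-continuous $H^\infty$-calculus on the open annulus;
\item produce vectors $x,y$ with $\langle f(T)x,y\rangle=f(\lambda)$.
\end{itemize}
This is the kind of machinery the cited result rests on, and you do not supply it.

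\emph{The upgrade to hyperinvariance.} This step, by contrast, is easy and should be stated precisely. If $U_te_n=e^{int}e_n$, then $U_tTU_t^*=e^{it}T$. Let $A\in\{T\}'$ and let $F_N$ be the Fej\'er kernel. The averages $\frac1{2\pi}\int_0^{2\pi}F_N(t)\,U_tAU_t^*\,dt$ are the Fej\'er means of the Laurent expansion of $A$, hence Laurent polynomials in $T$. They have norm at most $\|A\|$ and converge strongly to $A$. Therefore every closed subspace invariant under $T$ and $T^{-1}$ is hyperinvariant. A Brown-type subspace, being invariant under the whole rational calculus on the annulus, would qualify.
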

The next simple proposition is essentially a combination of the two theorems above.
\begin{proposition}\label{prop:annular-hyperinvariant-shift}
Let $B_w$ be an invertible bilateral weighted shift on $\ell^2(\mathbb Z)$.
Suppose that
\[
    \omega(B_w)=r(B_w),\qquad \omega(B_w^{-1})=r(B_w^{-1}).
\]
Then $B_w$ has a nontrivial hyperinvariant subspace.
\end{proposition}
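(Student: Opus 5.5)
The plan is to rescale $B_w$ so that its spectrum becomes a symmetric annulus $\mathcal A_s$, and then chain Theorem \ref{thm:annular-CG} with Theorem \ref{thm:CPS-shift}. Put $R=r(B_w)$ and $\rho=r(B_w^{-1})$. First I would recall the classical description of the spectrum of an invertible bilateral weighted shift. Such a shift is unitarily equivalent to $e^{i\theta}B_w$ for every $\theta$, so its spectrum is rotation invariant. By the classical result of Shields on weighted shifts, the spectrum is moreover connected, hence
\[
  \sigma(B_w)=\{z\in\CC:\rho^{-1}\le |z|\le R\}.
\]
The spectral radius formula applied to $B_w$ and $B_w^{-1}$ fixes the two radii.

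Next I would normalize. Let $c=\sqrt{\rho/R}$ and $s=\sqrt{R\rho}$, and set $T=cB_w$. Then $T$ is again an invertible bilateral weighted shift, with weights $cw_n$. It has the same commutant as $B_w$, so it has the same hyperinvariant subspaces. Its spectrum is
\[
  \sigma(T)=\{s^{-1}\le|z|\le s\}=\mathcal A_s.
\]
The hypotheses $\omega(B_w)=R$ and $\omega(B_w^{-1})=\rho$ give
\[
  \omega(T)=cR=s,\qquad \omega(T^{-1})=\rho/c=s.
\]
Since $\rho^{-1}\le R$, we have $s\ge1$.

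In the main case $s>1$, Theorem \ref{thm:annular-CG} shows that $\mathcal A_s=\sigma(T)$ is a $K(s)$-spectral set for $T$. Theorem \ref{thm:CPS-shift} then yields a nontrivial hyperinvariant subspace for $T$, and this subspace is also hyperinvariant for $B_w$.

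The step I expect to be the obstacle is the degenerate case $s=1$, which Theorem \ref{thm:annular-CG} does not cover: the spectrum is then the unit circle, and $\omega(T)=\omega(T^{-1})=1$. For this case I would invoke the known fact that an invertible operator with $\omega(T)\le1$ and $\omega(T^{-1})\le1$ is unitary. (Alternatively, one could argue directly with the weights, using that $|\langle Te_n,e_{n+1}\rangle|$-type estimates force $|cw_n|=1$.) Once $T$ is unitary, every weight has modulus one, so $T$ is unitarily equivalent to the bilateral shift, a normal operator with spectral measure Lebesgue measure on $\TT$. The range of the spectral projection of a proper arc is then a nontrivial subspace. By Fuglede's theorem it is invariant under the commutant of $T$, hence hyperinvariant. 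This finishes the degenerate case, and hence the proof.
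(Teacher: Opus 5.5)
Your proposal is correct and follows the paper's proof. You use the same normalization to the symmetric annulus $\mathcal A_s$ (your $\rho$ is the reciprocal of the paper's), Theorem \ref{thm:annular-CG} followed by Theorem \ref{thm:CPS-shift} when $s>1$, and unitarity via the Stampfli fact when $s=1$; there you finish directly with the spectral projection of a proper arc and Fuglede's theorem, whereas the paper feeds normality back into Theorem \ref{thm:CPS-shift}. One caveat: your parenthetical alternative is not justified as stated, since a single-vector estimate with $x=(e_n+e_{n+1})/\sqrt2$ only gives $|cw_n|\le 2\omega(T)=2$, so the cited unitarity result is genuinely needed in that case.
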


\begin{proof}
Put
\[
    R=r(B_w),\qquad \rho=\frac1{r(B_w^{-1})},\qquad
    c=(R\rho)^{1/2},\qquad s=\left(\frac R\rho\right)^{1/2},
\]
and set $T_w=c^{-1}B_w$. By the standard spectral description of bilateral
weighted shifts, see \cite[Theorem~5a]{Sh},
\[
    \sigma(B_w)=\{z\in\mathbb C:\rho\le |z|\le R\}.
\]
Therefore
\[
    \sigma(T_w)=\{z\in\mathbb C:s^{-1}\le |z|\le s\}.
\]
Assume first that $R>\rho$, so that $s>1$. Then
\[
    \omega(T_w)=s,
    \qquad
    \omega(T_w^{-1})=s.
\]
By Theorem \ref{thm:annular-CG}, $\sigma(T_w)=\mathcal A_s$ is a spectral set
for $T_w$.

It remains to consider the circular case $R=\rho$. Then $s=1$ and
$\sigma(T_w)=\mathbb T$. In this case
\[
    W(T_w)\subset \overline{\mathbb D},
    \qquad
    W(T_w^{-1})\subset \overline{\mathbb D},
\]
and therefore $T_w$ is unitary by \cite[Corollary~1]{StampfliThin}. Thus, in
the circular case as well, $T_w$ is normal, and so $\sigma(T_w)$ is a spectral
set for $T_w$.

Scaling back, $\sigma(B_w)$ is a spectral set for $B_w$. Theorem
\ref{thm:CPS-shift} now gives a nontrivial hyperinvariant subspace for $B_w$.
\end{proof}

The corresponding norm-annulus statement is already contained in
\cite[Theorem~6.2]{CPS}.

\section{Final comments}\label{sec:final-comments}
Let us indicate the relation with \cite[Remark]{KL}. That remark shows
that if \(\widehat{\sigma(T)}\) is a spectral set for \(T\), and if
\(\partial\widehat{\sigma(T)}\) is a Jordan curve, then \(T\) has a
nontrivial invariant subspace.
 By Lemma
\ref{lem:hull-numerical-range}, in the non-degenerate case the hypothesis of
Theorem \ref{t1} is equivalent to
\[
        \widehat{\sigma(T)}=\overline{W(T)}.
\]
Since $\|p\|_{\sigma(T)}=\|p\|_{\widehat{\sigma(T)}}$ for polynomials,
Theorem \ref{crouzeix} supplies the polynomial estimate required in
\cite[Remark]{KL}. Moreover, the outer boundary of $\sigma(T)$ is then
$\partial\widehat{\sigma(T)}=\partial W(T)$, the boundary of a compact
convex set with nonempty interior, and hence a Jordan curve. Thus, apart from
the degenerate case already treated in the proof, Theorem \ref{t1} also
follows from \cite[Remark]{KL} (with the conformal step extended as in the proof of Theorem \ref{t1}). The examples in this paper emphasize further applications and a complementary
realization point: in natural classes such as hyponormal and Toeplitz operators
one can produce operators satisfying this geometry explicitly, including
examples not covered by Brown's theorem or by Peller's Toeplitz criteria.
The statement on bilateral shifts is of the same spirit, but it is direct and 
does not rely on conformal mappings machinery.

Note also that the appearances of $\conv\sigma(T)$ above rely on the
convexoid identities $\overline{W(T)}=\conv\sigma(T)$ available in the
hyponormal and Toeplitz settings. For a general operator the numerical range
cannot be replaced by the convex hull of the spectrum: for a nonzero square-zero
operator $N$ one has $\widehat{\sigma(N)}=\conv\sigma(N)=\{0\}$, whereas
$W(N)$ has nonempty interior.

\end{document}